\documentclass[10pt]{amsart}
\usepackage{amstext,amsfonts,amssymb,amscd,amsbsy,amsmath}
\usepackage{ifthen}
\usepackage{amsthm}
\usepackage{latexsym}
\usepackage{enumerate}
\usepackage[all]{xy}

\begin{document}

\newcommand{\V}{{\cal V}}      
\renewcommand{\O}{{\cal O}}
\newcommand{\LL}{\cal L}
\newcommand{\Ext}{\hbox{{\rm Ext}}}
\newcommand{\Tor}{\hbox{Tor}}
\newcommand{\Hom}{\hbox{Hom}}
\newcommand{\Proj}{\hbox{Proj}}
\newcommand{\GrMod}{\hbox{GrMod}}
\newcommand{\grmod}{\hbox{gr-mod}}
\newcommand{\tors}{\hbox{tors}}
\newcommand{\rank}{\hbox{rank}}
\newcommand{\End}{\hbox{End}}
\newcommand{\GKdim}{\hbox{GKdim}}
\newcommand{\im}{\hbox{im}}
\renewcommand{\ker}{\hbox{ker}}
\newcommand{\isom}{\cong}
\newcommand{\lk}{\text{link}}
\newcommand{\soc}{\text{soc}}

\newcommand{\lonto}{{\protect \longrightarrow\!\!\!\!\!\!\!\!\longrightarrow}}

\newcommand{\m}{{\mu}}
\newcommand{\gl}{{\frak g}{\frak l}}
\newcommand{\ssl}{{\frak s}{\frak l}}
\newcommand{\I}{\mathfrak{I}}
\newcommand{\A}{\mathfrak{A}}
\newcommand{\G}{\mathfrak{G}}

\newcommand{\ds}{\displaystyle}
\newcommand{\s}{\sigma}
\renewcommand{\l}{\lambda}
\renewcommand{\a}{\alpha}
\renewcommand{\b}{\beta}
\newcommand{\g}{\gamma}
\newcommand{\z}{\zeta}
\newcommand{\e}{\varepsilon}
\newcommand{\D}{\Delta}
\renewcommand{\d}{\delta}
\newcommand{\p}{\rho}
\renewcommand{\t}{\tau}

\newcommand{\C}{{\mathbb C}}
\newcommand{\N}{{\mathbb N}}
\newcommand{\Z}{{\mathbb Z}}
\newcommand{\ZZ}{{\mathbb Z}}
\newcommand{\K}{{\mathcal K}}
\newcommand{\F}{{\mathcal F}}
\newcommand{\cS}{\mathcal S}

\newcommand{\rowxy}{(x\ y)}
\newcommand{\colxy}{ \left({\begin{array}{c} x \\ y \end{array}}\right)}
\newcommand{\scolxy}{\left({\begin{smallmatrix} x \\ y
\end{smallmatrix}}\right)}

\renewcommand{\P}{{\Bbb P}}

\newcommand{\la}{\langle}
\newcommand{\ra}{\rangle}
\newcommand{\tensor}{\otimes}

\newtheorem{thm}{Theorem}[section]
\newtheorem{lemma}[thm]{Lemma}
\newtheorem{cor}[thm]{Corollary}
\newtheorem{prop}[thm]{Proposition}

\theoremstyle{definition}
\newtheorem{defn}[thm]{Definition}
\newtheorem{notn}[thm]{Notation}
\newtheorem{ex}[thm]{Example}
\newtheorem{rmk}[thm]{Remark}
\newtheorem{rmks}[thm]{Remarks}
\newtheorem{note}[thm]{Note}
\newtheorem{example}[thm]{Example}
\newtheorem{problem}[thm]{Problem}
\newtheorem{ques}[thm]{Question}
\newtheorem{conj}[thm]{Conjecture}
\newtheorem{thingy}[thm]{}
\newtheorem{const}[thm]{Construction}

\newcommand{\onto}{{\protect \rightarrow\!\!\!\!\!\rightarrow}}
\newcommand{\donto}{\put(0,-2){$|$}\put(-1.3,-12){$\downarrow$}{\put(-1.3,-14.5) {$\downarrow$}}}

\newcounter{letter}
\renewcommand{\theletter}{\rom{(}\alph{letter}\rom{)}}

\newenvironment{lcase}{\begin{list}{~~~~\theletter} {\usecounter{letter}
\setlength{\labelwidth4ex}{\leftmargin6ex}}}{\end{list}}

\newcounter{rnum}
\renewcommand{\thernum}{\rom{(}\roman{rnum}\rom{)}}

\newenvironment{lnum}{\begin{list}{~~~~\thernum}{\usecounter{rnum}
\setlength{\labelwidth4ex}{\leftmargin6ex}}}{\end{list}}

\title{Finiteness Conditions on the Yoneda Algebra of a Monomial Algebra}

\subjclass[2010]{Primary: } 

\keywords{ Yoneda algebra, monomial algebra}

\author[  Conner, Kirkman, Kuzmanovich, Moore ]{ }
\maketitle

\begin{center}

\vskip-.2in Andrew Conner \\
Ellen Kirkman\\
James Kuzmanovich\\
W. Frank Moore\\
\bigskip

Department of Mathematics\\
Wake Forest University\\
Winston-Salem, NC 27109\\
\bigskip

%
%

\end{center}

\setcounter{page}{1}

\thispagestyle{empty}

\vspace{0.2in}

\begin{abstract}
Let $A$ be a connected graded noncommutative monomial algebra.
We associate to $A$ a finite graph $\Gamma(A)$ called the CPS
graph of $A$.  Finiteness properties of the Yoneda algebra
$\Ext_A(k,k)$ including Noetherianity, finite GK dimension, and
finite generation are characterized in terms of $\Gamma(A)$. We show
these properties, notably finite generation, can be checked by means
of a terminating algorithm. 
\end{abstract}

\bigskip

\section{Introduction}

Complete intersections are a well-studied class of commutative
algebras, yet there is not an agreed upon notion of complete intersection
in the case of noncommutative algebras.  From the point of view of noncommutative
algebraic geometry, such a generalization should be homological.  A
starting point for a homological definition of complete intersection
is found in the results of Gulliksen \cite{Gull1,Gull2}, F\'{e}lix-Thomas \cite{FT} and
F\'{e}lix-Halperin-Thomas \cite{FHT},
which state for a graded Noetherian commutative $k$-algebra, the
following properties are equivalent:
  
\begin{enumerate}[(i)]
\item $A$ is a graded complete intersection
\item \label{noeth} $\Ext_A(k,k)$ is a Noetherian $k$-algebra
\item \label{finGK} $\Ext_A(k,k)$ has finite Gelfand-Kirillov (GK) dimension.
\end{enumerate}

However, conditions \eqref{noeth} and \eqref{finGK} are not equivalent
for graded Noetherian $k$-algebras, in fact, not even for algebras with monomial
relations.  Since such algebras are a tractable class of algebras with
a well-understood projective resolution of the trivial module (see, for example \cite{An},\cite{CS}),
their Yoneda algebras are computable, though often complex.
This paper concerns the study of conditions \eqref{noeth} and 
\eqref{finGK} as well as the finite generation of $\Ext_A(k,k)$ when $A$ is a
connected graded noncommutative $k$-algebra with finitely many monomial relations.

To a monomial algebra $A$, we associate a finite directed graph
$\Gamma(A)$ which we call the \emph{CPS graph} of $A$. See Construction
\ref{CPSdef} for the definition of $\Gamma(A)$. Our first result
concerns the Gelfand-Kirillov dimension of the Yoneda algebra $E(A) = \Ext_A(k,k)$.

\begin{thm}[Corollary \ref{GKfacts}]
\label{intro1}
Let $A$ be a monomial $k$-algebra. If no pair of distinct circuits in
$\Gamma(A)$ have a common vertex, then $\GKdim(E(A))$ is the maximal
number of distinct circuits contained in any walk. Otherwise, $\GKdim(E(A))=\infty$.
\end{thm}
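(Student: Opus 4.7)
The plan is to reduce the theorem to a growth estimate for walks in the finite directed graph $\Gamma(A)$ via the structural description of $E(A)$. I would begin by invoking the combinatorial result (presumably established earlier in the paper, rooted in Anick's resolution for monomial algebras) that furnishes a basis of $E(A)$ indexed by walks in $\Gamma(A)$, so that $\dim_k E(A)_n$ is comparable in growth to the number of walks of length $n$ in $\Gamma(A)$. Because $\GKdim$ depends only on the growth order of the Hilbert series, the theorem reduces to a standard dichotomy for the growth of the walk-counting function of a finite digraph.

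Suppose first that two distinct circuits $C_1$ and $C_2$ share a common vertex $v$. Every word in the free monoid on the alphabet $\{C_1, C_2\}$ determines a walk based at $v$ by concatenation, and distinct words produce distinct walks (the first position at which two words differ yields an edge leaving $v$ that lies on a different circuit). Hence the number of walks of length at most $nL$, with $L = \max(|C_1|, |C_2|)$, is at least $2^n$. This exponential growth forces $\GKdim(E(A)) = \infty$.

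Conversely, assume the circuits of $\Gamma(A)$ are pairwise vertex-disjoint. Then a walk cannot re-enter a circuit it has left, so every walk admits a unique factorization $p_0 D_1^{a_1} p_1 D_2^{a_2} \cdots D_r^{a_r} p_r$, where the $D_j$ are distinct circuits traversed consecutively, each $a_j \geq 1$, and each $p_j$ is an acyclic path containing no complete circuit. For fixed circuit sequence and fixed $(p_0, \ldots, p_r)$, the number of admissible tuples $(a_1, \ldots, a_r)$ with $\sum a_j |D_j| \leq n - \sum |p_j|$ is a polynomial of degree $r$ in $n$. Since $\Gamma(A)$ is finite there are only finitely many such choices, and summing yields a polynomial in $n$ whose degree equals the maximum number of distinct circuits contained in any walk.

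The main obstacle lies in the reduction itself: correctly identifying the growth of $\dim_k E(A)_n$ with the walk-counting function of $\Gamma(A)$, using the appropriate (total) grading convention. Provided the CPS graph and its associated basis for $E(A)$ have been developed beforehand, the remainder of the argument is the routine combinatorial analysis sketched above.
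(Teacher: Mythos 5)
Your proposal is correct and takes essentially the same route as the paper: the paper's proof of this statement (Corollary \ref{GKfacts}(2),(3)) consists of Proposition \ref{pathBasis} — anchored walks of length $i$ in $\Gamma(A)$ index a $k$-basis of $E^{i+1}(A)$ — followed by an appeal to the standard growth dichotomy for walk counts in a finite digraph (``standard (see \cite{U})''), which is precisely your reduction plus the standard combinatorial argument written out in full. The only discrepancies are harmless: the walk basis here comes from the Cassidy--Shelton resolution rather than Anick's, and it is indexed by \emph{anchored} walks rather than all walks, which does not affect the growth comparison since every vertex of $\Gamma(A)$ is reachable by a directed walk from $\G_0$.
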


Given any connected graded $k$-algebra $B$ one can use a
noncommutative Gr\"{o}bner basis to associate to $B$ a monomial
algebra $B'$ with the property $\GKdim(E(B))\le \GKdim(E(B'))$. Thus,
Theorem \ref{intro1} can provide an easily calculated upper bound on
$\GKdim(E(B))$, though this bound is not always finite when $\GKdim E(B)$ is. See Remark \ref{gb} below.

The Yoneda product on $E(A)$ can also be described combinatorially in
terms of walks in the graph $\Gamma(A)$. Up to a notion of equivalence
described in Section \ref{CPSgraph}, all nonzero Yoneda products are
compositions of admissible walks in $\Gamma(A)$. Using this
description of the Yoneda product, we are able to characterize finite
generation and the Noetherian property in $E(A)$. See Sections 
\ref{CPSgraph} and \ref{multStructure} for definitions of terminology and notation.

\begin{thm}
\label{intro2}
Let $A$ be a monomial $k$-algebra. 
\begin{enumerate}
\item \label {introMainTheorem} (Theorem \ref{mainTheorem}) $E(A)$ is
  finitely generated if and only if for every infinite anchored walk $p$ in
  $\Gamma(A)$, $\widetilde{p}$ contains a dense edge
  or two admissible edges of opposite parity.
\item (Theorem \ref{Noeth}) $E(A)$ is left (resp. right) Noetherian if
  and only if every vertex of $\Gamma(A)$ lying on an oriented circuit
  has out-degree (resp. in-degree) one and every edge of every
  oriented circuit is admissible.
\end{enumerate}
\end{thm}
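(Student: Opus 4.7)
The plan is to exploit the combinatorial description of the Yoneda product outlined in the introduction: $k$-basis elements of $E(A)$ correspond to equivalence classes of admissible walks in $\Gamma(A)$, and nonzero Yoneda products correspond to concatenations of such walks. Both parts of Theorem \ref{intro2} then reduce to graph-theoretic statements about the finite graph $\Gamma(A)$.

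For part (1), I would first pin down the combinatorial meaning of decomposability: the basis element corresponding to a walk $p$ is a nontrivial Yoneda product precisely when $p$ admits an internal splitting at which both halves remain admissible walks. The conditions ``contains a dense edge'' and ``contains two admissible edges of opposite parity'' should be exactly the two combinatorial mechanisms producing such a splitting, so the key lemma to establish is: $\widetilde{p}$ contains a dense edge or two admissible edges of opposite parity if and only if the basis element attached to a sufficiently long initial segment of $p$ lies in the subalgebra generated by basis elements of strictly smaller homological degree. The $(\Leftarrow)$ direction of (1) would then proceed by contrapositive: if $E(A)$ is not finitely generated, choose indecomposable basis elements of unbounded homological degree; the associated walks lie in a finitely branching tree, so K\"{o}nig's lemma supplies an infinite anchored walk $p$ no prefix of whose $\widetilde{p}$ contains either splitting witness, contradicting the hypothesis. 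The $(\Rightarrow)$ direction reverses this: an exceptional infinite anchored walk produces an infinite family of basis elements of $E(A)$ that are indecomposable with respect to the Yoneda product.

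For part (2), the plan is to show that the stated condition corresponds exactly to forcing the walk-tree based at any circuit vertex to be a single ray. For the $(\Leftarrow)$ direction, when every vertex on an oriented circuit has out-degree one and every circuit edge is admissible, a walk that enters a circuit is forced to traverse it forever and every such traversal is compatible with the Yoneda product. I would then realise $E(A)$ as a finitely generated left module over a commutative subalgebra $R$ generated by the finitely many classes of oriented circuits; since $R$ is a finitely generated commutative $k$-algebra it is Noetherian, and the module-finiteness follows from the observation that there are only finitely many ``lead-in'' walks modulo the $R$-action. This makes $E(A)$ left Noetherian. For the $(\Rightarrow)$ direction, if some vertex $v$ on an oriented circuit $C$ has out-degree $\geq 2$, take an outgoing edge $e$ not on $C$ and form, for each $n$, a basis element $x_n$ represented by looping around $C$ exactly $n$ times starting at $v$ before leaving through $e$; the failure of out-degree one prevents the $x_n$ from being absorbed into a finitely generated left ideal, so the chain $\sum_{i \leq n} E(A) x_i$ is strictly ascending. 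A non-admissible circuit edge obstructs the analogous absorption directly. The right-handed statement is symmetric, with in-degree replacing out-degree and right ideals replacing left.

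The hardest step will be the $(\Leftarrow)$ direction of part (2): turning the combinatorial picture into an honest module-finite description of $E(A)$ over the circuit subalgebra $R$ requires a careful census of admissible walks and a verification that the Yoneda product restricted to $R$ is genuinely commutative and behaves like a polynomial ring on the circuit generators. The subtleties of the equivalence relation on walks and of the parity condition will all concentrate at this step. By contrast, the Noetherian failures in the $(\Rightarrow)$ direction of (2) and both directions of (1) should follow from relatively direct combinatorial arguments once the dictionary between walks and Yoneda products, together with K\"{o}nig's lemma on the finite graph $\Gamma(A)$, is in place.
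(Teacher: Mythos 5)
Your plan for part (1) is essentially the paper's own proof: the ``internal splitting'' dictionary you describe is Corollary \ref{decompClasses}, the compactness step is Lemma \ref{Konig}, and the equivalence of ``dense edge or two admissible edges of opposite parity'' with all-but-finitely-many prefixes being decomposable is exactly how Theorem \ref{mainTheorem} is proved. Your necessity argument for part (2) also mirrors the paper: your elements $x_n$ are the paper's classes $\e_{q_\ell}$, $q_\ell=pC^\ell c_0\cdots c_iv$ (note you must prepend an anchored lead-in path $p$, which you omit, for these walks to index basis elements at all), and the non-admissible circuit edge is handled there by exhibiting an infinitely generated ideal with trivial multiplication.

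The genuine gap is in the sufficiency direction of part (2), precisely the step you flag as hardest. The subalgebra $R$ generated by ``classes of oriented circuits'' is in general neither commutative nor large enough for $E(A)$ to be a finitely generated left $R$-module, and the simplest example satisfying the hypotheses already kills the plan. Take $A=k\la x\ra/\la x^3\ra$. Then $\Gamma(A)$ is the two-cycle $x\to x^2\to x$; both edges are admissible (each is equivalent to the anchored edge $x\to x^2$) and both vertices have in- and out-degree one, so by Theorem \ref{Noeth} $E(A)$ is Noetherian. The anchored walks are the alternating walks $x,x^2,x,\ldots$, one in each length. The only admissible closed walks around the circuit are the $k$-fold wraps based at $x$ (the wrap based at $x^2$ has tensor monomial $x^5\neq x^4$, so it is equivalent to no anchored walk), giving classes $z_k\in E^{2k+1}$. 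By Proposition \ref{pathProducts}, $z_k\star z_l=0$ for all $k,l$: the unique walk equivalent to a wrap begins at $x$, every wrap ends at $x$, and $\Gamma(A)$ has no edge $x\to x$. So $R_+\star R_+=0$; worse, $R_+\star E(A)$ lies in the span of the classes of even-length anchored walks, so $E(A)/R_+\star E(A)$ contains the classes of all odd-length anchored walks and is infinite-dimensional. Hence $E(A)$ is not a finitely generated left $R$-module even though it is Noetherian. The element actually responsible for the polynomial-like structure is the class of the \emph{open} admissible edge $x\to x^2$, whose Yoneda powers wrap around the circuit; it is not a circuit class. The obstruction is structural: left Yoneda multiplication appends walks at the terminal end (Proposition \ref{pathProducts}), and long anchored walks may terminate at any vertex of a circuit, so a module-finiteness argument would need based wraps at every circuit vertex; but based wraps at distinct vertices of one circuit need not exist (as above), and when they do, their two products are, when nonzero, classes of walks with distinct initial vertices, so $R$ fails to be commutative.

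The paper's actual argument avoids this entirely: it puts a total order on the path basis (via in-paths to circuits), observes that conditions (1) and (2) force every sufficiently long anchored walk to end in an admissible circuit edge and hence be decomposable by Corollary \ref{decompClasses}, and then runs a leading-term filtration and Hilbert-basis-style argument on an arbitrary left ideal, with Lemma \ref{pathEquivalence} supplying the contradictions in the converse direction. To salvage your route you would have to replace ``circuit classes'' by a correctly chosen family (something like classes of admissible circuit edges), and then prove both that the subalgebra they generate is Noetherian and that $E(A)$ is module-finite over it; neither claim is evident, and no such statement is available in the paper.
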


The second statement extends a theorem of Green et.\ al.\ \cite{GSSZ}
who characterized Noetherianity of $E(A)$ in terms of the Ufnarovski
relation graph of $A$ in the case where $A$ is quadratic.

Theorem \ref{intro2}\eqref{introMainTheorem} describes an infinite set
of criteria to be satisfied for $E(A)$ to be finitely
generated. Whether finite generation of $E(A)$ can be determined by
finitely many criteria is a problem of recent interest. Working in the
more general context of monomial factor algebras of quiver path algebras, Green and
Zacharia \cite{GZ} describe a (potentially infinite) process by which
finite generation of the Yoneda algebra can be checked. Further
progress was made by Davis \cite{Davis} and Cone \cite{ConeThesis} who
showed finite generation can be determined by finitely many criteria
when the given quiver is a cycle or an ``in-spoked'' cycle. In Section
4 we show the same can be said in our situation; that is, when
the quiver consists of a single vertex and finitely many loops.

\begin{thm}[Theorem \ref{finiteCheck}]
\label{intro3}
Let $A$ be a monomial $k$-algebra with $gl.dim\ A = \infty$. Let $N$
be the smallest even integer greater than or equal to $2\mathcal E^2+\mathcal E+1$ where
$\mathcal E$ is the number of edges in $\Gamma(A)$. The Yoneda algebra $E(A)$
is finitely generated if and only if every anchored walk of length $N$ or
$N+1$ is decomposable.
\end{thm}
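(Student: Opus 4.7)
The plan is to derive Theorem \ref{finiteCheck} as a finite refinement of the infinite criterion supplied by Theorem \ref{mainTheorem}, using a pigeonhole argument on the edge set of $\Gamma(A)$ (which has size $\mathcal E$).

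First I would dispatch the easier direction: assume $E(A)$ is finitely generated. By Theorem \ref{mainTheorem}, every infinite anchored walk $p$ has $\widetilde{p}$ containing either a dense edge or two admissible edges of opposite parity. Given any anchored walk $q$ of length $N$ or $N+1$, the hypothesis $gl.dim\ A = \infty$ lets us extend $q$ to an infinite anchored walk $p$. The substance of this direction is to show that the structural information supplied by Theorem \ref{mainTheorem} already appears in the initial segment $q$, so that $q$ itself is decomposable. I expect the contrapositive to be cleaner here: if $q$ is not decomposable, then some extension of $q$ to an infinite anchored walk $p$ can be built so that $\widetilde{p}$ avoids a dense edge and avoids admissible edges of opposite parity, yielding a contradiction with Theorem \ref{mainTheorem}.

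The backward direction is the heart of the theorem. Assume every anchored walk of length $N$ or $N+1$ is decomposable, let $p$ be an infinite anchored walk, and suppose for contradiction that $\widetilde{p}$ contains neither a dense edge nor two admissible edges of opposite parity. The pigeonhole object is a pair (edge, parity of position); there are at most $2\mathcal E$ such pairs. In an initial segment of length $2\mathcal E^2 + \mathcal E + 1$, some such pair must repeat at least $\mathcal E + 2$ times, and this abundance should be used to locate within the segment the combinatorial obstruction to decomposability. The choice of $N$ as the smallest \emph{even} integer $\ge 2\mathcal E^2 + \mathcal E + 1$ is forced by parity: an admissible edge contributes to one parity class determined by the parity of its position, so testing only one of $N, N+1$ would leave one parity class unexamined; checking both guarantees that any obstruction involving either parity is caught by the finite check.

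The main obstacle will be the translation from the pigeonhole abundance to the structural conclusion about $\widetilde{p}$. Concretely, one must argue that whenever an edge-parity pair repeats $\mathcal E + 2$ times in an initial segment of the required length, either that edge becomes dense in $\widetilde{p}$, or admissible edges of both parities are forced to appear, or else the initial segment (at length $N$ or $N+1$, adjusted by parity) is not decomposable. I expect this step to require a careful analysis of the equivalence relation $p \mapsto \widetilde{p}$ developed in Section \ref{multStructure}, together with an inductive argument propagating decomposability from length $N, N+1$ to all longer lengths; the constant $2\mathcal E^2 + \mathcal E + 1$ should arise as the precise threshold beyond which any edge-parity sequence on $\mathcal E$ edges admits the repetition pattern needed to run this propagation.
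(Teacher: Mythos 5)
Your proposal has the difficulty of the two directions exactly reversed, and the direction you dismiss as ``easier'' is where the entire content of the theorem lies. Assuming $E(A)$ is finitely generated and $q$ is an indecomposable anchored walk of length $N$ or $N+1$, you assert that ``some extension of $q$ to an infinite anchored walk $p$ can be built so that $\widetilde{p}$ avoids a dense edge and avoids admissible edges of opposite parity'' --- but producing that walk is precisely what the bound $N$ is for, and it does not come cheaply. In the paper's proof (Theorem \ref{finiteCheck}), indecomposability of $q$ forces all admissible edges of $\widetilde{q}$ to share the parity of the first one, $q_jq_{j+1}$ with $j<L$; one then takes the admissible odd-length tail $p=q_j\cdots q_{N-1}$ and its unique equivalent \emph{anchored} walk $p'$, notes via Lemma \ref{density} that indecomposability forces $q_{2i+j}\neq p'_{2i}$ for all $i$, and --- this is the step your sketch lacks --- pigeonholes on the set $S$ of ordered pairs of \emph{equivalent but distinct edges} ($|S|\le \mathcal E(M-1)$, Lemma \ref{bound}) to find indices $c<d$ at which the edge of $q$ \emph{and} the corresponding edge of $p'$ repeat \emph{simultaneously}. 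Only then can one pump the block $z=q_{2c+j}\cdots q_{2d+j-1}$ into a periodic infinite walk $w$: the synchronized repetition guarantees that every admissible extension of $q_jq_{j+1}$ in $w$ is a prefix of the periodic shadow $w'=p'_0\cdots p'_{2d-1}z'z'\cdots$, so no edge becomes dense (Lemma \ref{density}, Remark \ref{firstEdgeDense}). Your pigeonhole on $(\text{edge},\text{parity})$ pairs in the walk alone cannot do this: repetition of an edge of $q$ lets you pump $q$, but gives no control over the equivalent anchored walk of the pumped walk, and density could well appear after pumping. This synchronization with the shadow walk is exactly why $M$ (the largest edge equivalence class) enters the true bound $2\mathcal E(M-1)+L+1$, of which $2\mathcal E^2+\mathcal E+1$ is only a coarsening; no counting scheme that ignores the equivalence relation $\sim$ can recover it.

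The direction you call ``the heart'' is, by contrast, short and requires neither pigeonhole nor any propagation of decomposability to longer lengths. Given an infinite anchored walk $p$, its prefixes $q'$ of length $N$ and $q$ of length $N+1$ are decomposable by hypothesis; by Corollary \ref{decompClasses} the decomposition of $q'$ yields an admissible edge $q_iq_{i+1}$ in $\widetilde{q'}$; by Proposition \ref{pathExtensions} the odd-length extensions of that edge are admissible, but by parity this accounts for the decomposability of only one of $q,q'$, so the other forces either an admissible edge of opposite parity or an admissible \emph{even}-length extension of $q_iq_{i+1}$, i.e.\ a dense edge --- and Theorem \ref{mainTheorem} finishes. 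As written, your version of this step is not an argument (``I expect this step to require\dots''), your arithmetic is off (a segment of length $2\mathcal E^2+\mathcal E+1$ forces only $\mathcal E+1$ repetitions among $2\mathcal E$ classes, not $\mathcal E+2$), and the repetition count plays no role in this direction anyway. Two smaller omissions: the paper's Theorem \ref{finiteCheck} carries the hypothesis $L>1$, with the $L=1$ case handled separately by Lemma \ref{L1}, a split your proposal never makes; and your claim that checking both $N$ and $N+1$ is needed ``so neither parity class goes unexamined,'' while roughly the right intuition, is only made precise by the parity argument above.
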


In our experience, determining if $E(A)$ is finitely generated when
$E(A)$ has infinite Gelfand-Kirillov dimension can be a difficult problem, and we were unable to obtain an efficient bound in Theorem
\ref{intro3}.  However, the case $\GKdim(E(A))<\infty$ is much
simpler. We describe a recursive algorithm for determining finite
generation in that case in Section \ref{algorithm}.

We sincerely thank Ed Green for the helpful conversations and illuminating examples he provided in the course of this project.

\section{The CPS graph}
\label{CPSgraph}

In \cite{PhanThesis}, C.\ Phan associated a weighted digraph to any monomial graded algebra $A$. One important feature of Phan's graph is that a $k$-basis for $E(A)$ is represented by certain directed paths. After establishing some notation, we recall the unweighted version of Phan's graph - which we call the \emph{CPS graph} of $A$ - and we record a description of a minimal graded projective resolution of $_Ak$ (due to Cassidy and Shelton) in terms of this graph. We also prove several combinatorial facts about the CPS graph needed later.

Let $k$ be a field.  Throughout this paper we use the phrase \emph{graded $k$-algebra} or just \emph{$k$-algebra} to mean a connected, $\N$-graded, locally finite-dimensional $k$-algebra which is finitely generated in degree 1.  If $A$ is a graded $k$-algebra, we use the term \emph{(left or right) ideal} to mean a graded (left or right) ideal of $A$ generated by homogeneous elements of degree at least 2, unless otherwise indicated. The augmentation ideal is $A_+=\bigoplus_{i\ge 1} A_i$. We abuse notation and use $k$ (or $_Ak$ or $k_A$) to denote the trivial graded $A$-module $A/A_+$. The bigraded Yoneda algeba of $A$ is  the $k$-algebra $E(A) = \bigoplus_{i,j\ge 0} E^{i,j}(A) = \bigoplus_{i,j\ge 0} \Ext^{i,j}_A(k,k)$. (Here $i$ denotes the cohomology degree and $j$ denotes the internal degree inherited from the grading on $A$.) Let $E^p(A)=\bigoplus_{q} E^{p,q}(A)$.

Let $s\in\N$ and let $V=\text{span}_k\{x_1,\ldots,x_s\}$. We denote the tensor algebra on $V$ by $T(V)$. The tensor algebra is a graded $k$-algebra, graded by tensor degree. We denote the tensor degree of a homogeneous element $w\in T(V)$ by $\deg w$. By a \emph{monomial} in $T(V)$ we mean a pure tensor with coefficient 1. We consider $1_{T(V)}$ a monomial. By a \emph{monomial algebra}, we mean an algebra of the form $A=T(V)/I$ where $I$ is an ideal of $T(V)$ generated by finitely many monomials.  Such an algebra $A$ is a graded $k$-algebra with the grading inherited from the tensor grading on $T(V)$.

Let $M$ be the set of monomials in $T(V)$. Multiplication in $T(V)$ induces the structure of a monoid on $M$. Let $I=\la w_1,\ldots w_r\ra$ be an ideal in $T(V)$. We assume the $w_i$ form a minimal set of monomial generators for $I$ and we let $d_i=\deg w_i$ be the tensor degree of $w_i$ for each $i$. Recall that we assume every $d_i\ge 2$. Let $A=T(V)/I$ and let $\pi:T(V)\rightarrow A$ be the natural surjection.

\begin{const}[CPS graph]
\label{CPSdef}
Suppose $m, w\in M-I$ and $w\tensor m\in I$. Let $L(w,m)=w'$ where $w=w''\tensor w'$ for $w', w''\in M$ and $w'$ is minimal such that $w'\tensor m\in I$. For $m\in M-I$ define
$$\A_m = \{ w\in M - I : w\tensor m\in I \text{ and } L(w,m)=w\}$$
Then the images of elements of $\A_m$ in $A$ generate the left annihilator of $\pi(m)$.

Let $\G_0 = \{x_1, \ldots x_s\}$ and for $i\ge 1$ let $\G_i = \bigcup_{w\in \G_{i-1}} \A_w$. Finally, let $\G=\bigcup_{i\ge 0} \G_i$.
Define the \emph{CPS graph} of $A$ to be the directed graph $\Gamma(A)$ with vertex set $\G$, and edges $m_1\rightarrow m_2$ whenever $m_2\in\A_{m_1}$.
\end{const}
We note the graph $\Gamma(A)$ is finite. The graph may have loops and parallel edges with opposite orientation, but it has no parallel edges with the same orientation. 

\begin{ex}
\label{ex1}
Let $A=k\la a,b,c,d\ra/\la abc, cdab\ra$ and $B=A/\la bcda\ra$ The graphs $\Gamma(A)$ and $\Gamma(B)$ are shown below. 
$$
\xymatrix{
& \Gamma(A) &\\
c\ar@{->}[r] & ab\ar@/^/[r] & cd\ar@/^/@{->}[l]\\
b\ar@{->}[r] & cda\ar@{->}[u] &\\
}\qquad 
\xymatrix{
& \Gamma(B) &\\
c\ar@{->}[r] & ab\ar@/^/[r] & cd\ar@/^/@{->}[l]\\
b\ar@/^/[r] & cda\ar@/^/[l] &\\
a\ar@/^/[r] & bcd\ar@/^/[l] &\\
}
$$
\end{ex}

\begin{rmk}
\label{G0edge}
An obvious, but extremely important feature of the CPS graph is that there is a directed edge $m_1\rightarrow m_2$ with $m_1\in\G_0$ if and only if $m_2\tensor m_1$ is a minimal generator of $I$. As illustrated by Proposition \ref{pathBasis} below, this correspondence parallels the standard identification of $\Ext^2_A(k,k)$ with the graded dual of the space $I/(V\tensor I+I\tensor V)$.
\end{rmk}
 
If the defining relations of a monomial algebra $A$ are quadratic, $A$ is Koszul \cite{PP}. In that case, $\Gamma(A)$ is Ufnarovski's ``relation graph'' \cite{U} for the Koszul dual algebra $A^!$. We also note that because we consider only minimal left annihilators, the CPS graph $\Gamma(A)$ is quite different from the notion of ``zero-divisor graph'' studied recently in \cite{Akbari}. 
 
We adopt some standard graph-theoretic terminology. By a \emph{walk} we mean a finite or infinite sequence $v_0v_1v_2\cdots$ of vertices where  $v_i\rightarrow v_{i+1}$ is a directed edge for all $0\le i<n$. If $v_0 v_1 \cdots v_n$ is a finite walk, we say the walk has \emph{length $n$}. A walk is called a \emph{path} if it contains no repeated vertices. We will not need to distinguish walks which repeat vertices but not edges. By a \emph{closed walk of length $n$} we mean a walk of length $n$ such that $v_n=v_0$. A \emph{circuit of length} $n$ is a closed walk of length $n$ such that $v_0,\ldots,v_{n-1}$ are distinct. In the context of a weighted digraph, we abuse this terminology slightly and use ``walk,'' ``path,'' and ``circuit'' to refer to sequences of vertices in the underlying unweighted graph. If $p$ and $q$ are walks of length $n$ and $m$ respectively, we say $p$ \emph{extends} $q$ or $q$ is a \emph{prefix} of $p$ and write $q\vdash p$ if $n\ge m$ and $p_i=q_i$ for all $0\le i\le m$. 
 
In \cite[\S 5]{CS}, Cassidy and Shelton give a combinatorial description of a minimal graded projective left $A$-module resolution $P_{\bullet}$ of $_Ak$ in terms of monomial matrices. We briefly recount their resolution here, indexing the bases of each graded projective module by certain walks in $\Gamma(A)$.

Let $\mathcal W_n$ denote the set of all walks $w$ of length $n$ in $\Gamma(A)$ such that $w_0\in\G_0$. For each $w\in\mathcal W_n$, let $d_w=\sum_{i=0}^n \deg w_i$ where $\deg w_i$ denotes the tensor degree of the monomial $w_i$. Let $A(-d_w)$ be the graded free left $A$-module of rank 1 with grading shift $A(-d_w)_p = A_{p-d_w}$. Choose a basis for $A(-d_w)$ and denote this element by $e_w$. Let $P_0=A$ be the graded free module with fixed basis element $e_{\emptyset}$ and for $j>0$, let 
$$P_j=\bigoplus_{w\in\mathcal W_{j-1}} A(-d_w)$$
Define $d_j:P_j\rightarrow P_{j-1}$ on the $A$-basis $\{e_w : w\in\mathcal W_{j-1}\}$ by setting $d_j(e_w) = \pi(w_{j-1}) e_{\bar w}$ where $\bar w = w_0\cdots w_{j-2}$ if $j\ge 2$ and $\bar w=\emptyset$ if $j=1$. Extend $d_j$ $A$-linearly to all of $P_j$. Since $w_{j-1}\rightarrow w_{j}$ is an edge in $\Gamma(A)$ only if $w_j\in\A_{w_{j-1}}$, it is clear that $d_jd_{j+1}=0$ for $j\ge 0$.
The following lemma is a straightforward consequence of the definition of $\Gamma(A)$.

\begin{lemma}
\label{graphComplex}
The complex $(P_{\bullet},d_{\bullet})$ described above is a minimal graded projective resolution of $_Ak$.
\end{lemma}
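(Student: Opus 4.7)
The proof has three components: that $(P_\bullet,d_\bullet)$ is a complex, that it is minimal, and that it is exact. The first two are immediate. For any $w\in\mathcal{W}_j$, the edge $w_{j-1}\to w_j$ in $\Gamma(A)$ forces $w_j\otimes w_{j-1}\in I$ and hence $\pi(w_j)\pi(w_{j-1})=0$ in $A$, so $d_jd_{j+1}(e_w)=0$. Minimality holds because each monomial $w_{j-1}$ has positive tensor degree (it is a vertex of $\Gamma(A)$, hence in $M-I$ and non-empty), so $\pi(w_{j-1})\in A_+$ and the matrix of $d_j$ has entries in the augmentation ideal. The substance of the lemma is therefore exactness.

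For exactness I would construct an explicit contracting homotopy using the canonical $k$-basis of $A$ provided by Bergman's diamond lemma: the \emph{irreducible} monomials -- those in $M$ containing no $w_i$ as a sub-monomial -- form a $k$-basis of $A$. Consequently each $P_j$ for $j\geq 1$ has a $k$-basis $\{m\,e_w : m\ \text{irreducible},\ w\in\mathcal{W}_{j-1}\}$, and on it $d_j(m\,e_w)$ equals $(m\otimes w_{j-1})\,e_{\bar w}$ when $m\otimes w_{j-1}$ is irreducible and $0$ otherwise. Set $s_{-1}\colon k\to P_0$, $1\mapsto e_\emptyset$, and for $j\geq 0$ define $s_j$ on basis elements as follows: for $j=0$, let $s_0(e_\emptyset)=0$ and $s_0(m\,e_\emptyset)=m'\,e_x$ when $m=m'\otimes x$ with $x$ the last letter of $m\neq 1$; for $j\geq 1$, search for the \emph{shortest} suffix $v$ of $m$ such that $v\otimes w_{j-1}\in I$, and put $s_j(m\,e_w)=m''\,e_{w\cdot v}$ when such a $v$ exists and $m=m''\otimes v$, or $s_j(m\,e_w)=0$ otherwise. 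The minimality clause in Construction \ref{CPSdef} ensures that any such shortest $v$ satisfies $L(v,w_{j-1})=v$, whence $v\in\A_{w_{j-1}}$ and $w\cdot v\in\mathcal{W}_j$.

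The contracting homotopy identity $d_{j+1}s_j+s_{j-1}d_j=\operatorname{id}_{P_j}$ (augmented by $\epsilon$ in place of $d_0$) then splits into two cases on each basis element $m\,e_w$. If $m$ has a relation-suffix $v$, then $d_j(m\,e_w)=0$ while $d_{j+1}s_j(m\,e_w)=(m''\otimes v)\,e_w=m\,e_w$. Otherwise $m\otimes w_{j-1}$ is itself irreducible, since any $w_i$ appearing in it would have to straddle the boundary between $m$ and $w_{j-1}$ and would thereby yield a forbidden relation-suffix of $m$; so $d_j(m\,e_w)=(m\otimes w_{j-1})\,e_{\bar w}$, and since $L(w_{j-1},w_{j-2})=w_{j-1}$, the shortest suffix of $m\otimes w_{j-1}$ forming a relation with $w_{j-2}$ is $w_{j-1}$ itself, giving $s_{j-1}d_j(m\,e_w)=m\,e_w$.

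The main obstacle is the unique-decomposition statement underpinning $s$. For each irreducible $m$, the suffixes $v$ with $v\otimes w_{j-1}\in I$ are totally ordered by length, so a shortest such $v$ is unique when it exists; the minimality clause defining $\A_{w_{j-1}}$ is precisely what aligns this combinatorial ``shortest suffix'' with the outgoing edges of $w_{j-1}$ in $\Gamma(A)$. Once that bookkeeping is in place, the contracting homotopy identity delivers exactness and the lemma is proved.
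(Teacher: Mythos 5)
Your proof is correct, but it takes a genuinely different route from the paper. The paper gives no self-contained argument: it presents $(P_{\bullet},d_{\bullet})$ as a re-indexing, by anchored walks in $\Gamma(A)$, of the minimal graded projective resolution that Cassidy and Shelton construct in \cite{CS} via monomial matrices, notes that the bases can be ordered so the matrices of the $d_j$ are precisely those monomial matrices, and declares the lemma a straightforward consequence of the definition of $\Gamma(A)$. You instead prove everything from scratch: the complex and minimality checks are the same easy observations the paper makes, and for exactness you build an explicit $k$-linear contracting homotopy out of the irreducible-monomial basis of $A$. Your key points are sound: the minimality clause in Construction \ref{CPSdef} guarantees that the shortest suffix $v$ of an irreducible $m$ with $v\otimes w_{j-1}\in I$ satisfies $L(v,w_{j-1})=v$, hence $v\in\A_{w_{j-1}}$ and $w\cdot v\in\mathcal W_j$; when $m$ has no relation-suffix, any relation inside $m\otimes w_{j-1}$ would straddle the boundary and produce one, so $d_j(m\,e_w)$ is a genuine basis element; and the identity $L(w_{j-1},w_{j-2})=w_{j-1}$, which holds exactly because $w_{j-2}\rightarrow w_{j-1}$ is an edge, forces $s_{j-1}d_j(m\,e_w)=m\,e_w$ in that case. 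The two cases together give $d_{j+1}s_j+s_{j-1}d_j=\operatorname{id}$ on every basis element (including the low-degree cases with $\epsilon$ and $s_{-1}$), and a $k$-linear contracting homotopy suffices for exactness. What your approach buys is self-containedness---the lemma no longer rests on the reader verifying the dictionary between walks and the monomial matrices of \cite{CS}---at the cost of length; the paper's approach buys brevity and fits its stated aim of merely recounting the Cassidy--Shelton resolution in graph-theoretic terms.
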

 
Moreover, the bases for the $P_j$ can be ordered so the matrices of the $d_j$ with respect to the ordered bases are precisely the monomial matrices described in \cite{CS}. The next fact follows immediately from Lemma \ref{graphComplex}.
 
 \begin{prop}
 \label{pathBasis}
 Let $A$ be a monomial $k$-algebra and $i\in\N$. Then the graded duals $\{\e_w\}$ of the basis elements $\{e_w\}$ where $w$ is a walk of length $i$ in $\Gamma(A)$ with $w_0\in\G_0$ form a $k$-basis for $\Ext_A^{i+1}(k,k)$.
 \end{prop}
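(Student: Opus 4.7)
The statement is a direct consequence of minimality of the resolution $P_\bullet$ constructed before Lemma \ref{graphComplex}, so the plan is simply to unpack that minimality. First I would apply the contravariant functor $\Hom_A(-,k)$ to the resolution $P_\bullet \to {}_Ak \to 0$ of Lemma \ref{graphComplex}. By definition, $\Ext_A^{i+1}(k,k)$ is the $(i+1)$-st cohomology of the resulting cochain complex.

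The key observation is that the resolution is minimal: each differential $d_j$ sends the chosen free basis element $e_w$ to $\pi(w_{j-1})e_{\bar w}$, and since every vertex of $\Gamma(A)$ is (the image of) a monomial of tensor degree at least $1$, the element $\pi(w_{j-1})$ lies in the augmentation ideal $A_+$. Hence $d_j(P_j)\subseteq A_+ P_{j-1}$ for every $j\ge 1$, which means the induced maps $\Hom_A(d_j,k)\colon \Hom_A(P_{j-1},k)\to\Hom_A(P_j,k)$ are all zero. Therefore
\[
\Ext_A^{i+1}(k,k)\;\cong\;\Hom_A(P_{i+1},k).
\]

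Next I would compute $\Hom_A(P_{i+1},k)$ directly from the definition $P_{i+1}=\bigoplus_{w\in\mathcal W_i} A(-d_w)$. Since $\Hom_A(A(-d_w),k)$ is the one-dimensional space spanned by the $k$-linear map $\e_w$ that sends $e_w\mapsto 1$ and kills the other summand basis elements, we get
\[
\Hom_A(P_{i+1},k)\;=\;\bigoplus_{w\in\mathcal W_i} k\cdot\e_w.
\]
Combining with the previous display shows $\{\e_w : w\in\mathcal W_i\}$ is a $k$-basis for $\Ext_A^{i+1}(k,k)$, which is precisely the claim.

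There is no real obstacle here: the whole content is the minimality check $d_j(P_j)\subseteq A_+P_{j-1}$, which is immediate once one recalls that every element of $\G=\bigcup_i\G_i$ has positive tensor degree, so $\pi(w_{j-1})\in A_+$ for any edge $w_{j-1}\to w_j$ of $\Gamma(A)$. The only bookkeeping issue is the off-by-one between the resolution index $j$ and the cohomological degree, which is accounted for by the fact that $P_{i+1}$ is indexed by $\mathcal W_i$, walks of length $i$ anchored at $\G_0$.
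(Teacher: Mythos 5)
Your proof is correct and follows essentially the same route as the paper, which simply asserts the proposition ``follows immediately from Lemma \ref{graphComplex}'': the content is exactly the minimality check $d_j(P_j)\subseteq A_+P_{j-1}$ (since $\pi(w_{j-1})\in A_+$), so the dualized complex has zero differentials and $\Ext_A^{i+1}(k,k)\cong\Hom_A(P_{i+1},k)$ with basis $\{\e_w : w\in\mathcal W_i\}$. You have merely made explicit the standard argument the paper leaves implicit.
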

 
 We make extensive use of this basis throughout the paper. For ease of exposition, we make the following defintion.
 
 \begin{defn}
 A walk $w$ in $\Gamma(A)$ is called \emph{anchored} if $w_0\in\G_0$. 
 \end{defn}
 
 \begin{rmk} Anchored walks of length $i$ in $\Gamma(A)$ correspond to the sets $\Gamma_i$ described in \cite {GZ}.  \end{rmk}

 Several properties of $A$ and $E(A)$ are immediate from Proposition \ref{pathBasis}. We denote the Gelfand-Kirillov dimension of a $k$-algebra $A$ by $\GKdim(A)$.
 
 \begin{cor}
 \label{GKfacts}\  
 \begin{enumerate}
 \item If $\Gamma(A)$ contains no circuit, then $\text{gl.dim}(A)$ is equal to the length of the longest path in $\Gamma(A)$. Otherwise, $\text{gl.dim}(A)=\infty$.
 \item $\GKdim(E(A))=\infty$ if and only if $\Gamma(A)$ contains distinct circuits with a common vertex.
 \item If no pair of distinct circuits in $\Gamma(A)$ have a vertex in common, then $\GKdim(E(A))$ is the maximal number of circuits contained in any walk (ignoring multiplicity). 
 \item The Hilbert series of $E(A)$ is a rational function.
 \end{enumerate}
 \end{cor}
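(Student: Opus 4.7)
The approach is to leverage Proposition \ref{pathBasis}, which identifies a $k$-basis of $E^{i+1}(A)$ with the set $\mathcal W_i$ of anchored walks of length $i$ in $\Gamma(A)$, the basis element indexed by $w$ carrying internal degree $d_w$. Thus the Hilbert series and growth of $E(A)$ are controlled by walk-counting in the finite digraph $\Gamma(A)$, and the transfer-matrix method applies throughout. For part (1), we use $\text{gl.dim}(A)=\text{pd}_A(k)$ (since $A$ is connected graded), which by Lemma \ref{graphComplex} is finite if and only if anchored walk lengths are bounded. If $\Gamma(A)$ is acyclic, any walk revisiting a vertex would contain a simple circuit, so walks coincide with directed paths and their lengths are bounded by the longest such path. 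If $\Gamma(A)$ contains a circuit $C$, the construction $\G=\bigcup_i\G_i$ ensures $C$ is reachable from $\G_0$, so concatenating a connecting path with arbitrarily many traversals of $C$ produces anchored walks of unbounded length.

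For parts (2) and (3), the plan is to invoke the standard growth dichotomy for walks in a finite digraph. Let $M$ be the adjacency matrix of $\Gamma(A)$, so that $\dim_k E^{n+1}(A)=\mathbf 1^T M^n \mathbf 1_{\G_0}$. If two distinct simple circuits of lengths $a\leq b$ share a common vertex $v$, then starting at $v$ and independently choosing at each return which of the two circuits to traverse produces at least $2^{\lfloor n/b\rfloor}$ closed walks at $v$ of length $\leq n$; since $v$ is reachable from $\G_0$, this gives exponentially many anchored walks and forces $\GKdim(E(A))=\infty$. Conversely, when no two distinct circuits share a vertex, each strongly connected component of $\Gamma(A)$ is either a single vertex or a single simple circuit, and every walk decomposes uniquely as fixed connecting paths interleaved with full traversals of a sequence of distinct circuits $C_1,\ldots,C_r$. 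For a fixed such skeleton through $r$ circuits of lengths $\ell_1,\ldots,\ell_r$, the walks of length $n$ correspond bijectively to tuples $(t_1,\ldots,t_r)\in\N^r$ with $\sum t_i\ell_i=n-C$ for some constant $C$, and the number of such tuples is asymptotically polynomial of degree $r-1$ in $n$. Summing over the finitely many skeletons yields $\dim_k E^n\sim n^{k-1}$ where $k$ is the maximal number of circuits contained in any walk, hence $\GKdim E(A)=k$.

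For (4), weight each edge $v\to w$ of $\Gamma(A)$ by $st^{\deg w}$ and let $M(s,t)$ denote the resulting weighted adjacency matrix. By Proposition \ref{pathBasis}, the bigraded Hilbert series of $E(A)$ equals
$$H_{E(A)}(s,t)=1+s\sum_{v\in \G_0}t^{\deg v}\,\mathbf{e}_v^T(I-M(s,t))^{-1}\mathbf 1,$$
a rational function in $s,t$ since $(I-M(s,t))^{-1}$ has rational entries; specializing recovers the singly-graded Hilbert series. The main obstacle will be the growth analysis in (2)--(3): matching the exponent of polynomial growth exactly to the maximum circuit count requires careful bookkeeping of entry/exit vertices of each circuit and an Ehrhart-style lattice-point count within each skeleton, though both steps are close to Ufnarovski's classical digraph growth theorem and should be routine once the bookkeeping is set up.
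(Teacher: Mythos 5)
Your proposal is correct and takes essentially the same approach as the paper: the paper's entire proof reads ``(1) is clear; (2), (3), and (4) are standard (see \cite{U})'', and your transfer-matrix walk-counting arguments are precisely the standard Ufnarovski-style digraph growth results that citation invokes, applied to the walk basis of Proposition \ref{pathBasis}. (One small caution, affecting the statement rather than your method: since anchored walks of length $i$ index a basis of $\Ext_A^{i+1}(k,k)$, the global dimension in part (1) is one more than the length of the longest path, so keep this shift in mind when upgrading the bound in your part (1) argument to an exact equality.)
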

 
 \begin{proof}
 (1) is clear. (2), (3), and (4) are standard (see \cite{U}).
 
 \end{proof}

\begin{rmk}
\label{gb}
To any connected graded $k$-algebra $B=T(V)/J$ (we do not assume $J$ is generated by monomials) one can associate a monomial algebra in the usual way: Choose an ordered basis of $V$ and induce a total ordering the monoid $M$ via degree-lexicographic order. Let $\mathcal F$ be a noncommutative Gr\"{o}bner basis of $J$ with respect to this ordering.  Let $ht(\mathcal F)$ be the set of high terms of elements of $\mathcal F$ and let $B'=T(V)/\la ht(\mathcal F)\ra$. Let $$P_B(y,z) = \sum_{p,q} \dim \Ext_B^{p,q}(k,k)y^pz^q$$ denote the Poincare series of $B$. From the well-known coefficientwise inequality $P_B(y,z)\le P_{B'}(y,z)$ (see Lemma 3.4 of \cite{An}) we can deduce $\GKdim(E(B))\le \GKdim(E(B'))$. Equality holds in the important case where the Gr\"{o}bner basis for $J$ consists of homogeneous polynomials of the same degree (see Corollary 4.6 of \cite{JoW}). Thus Corollary \ref{GKfacts} can sometimes provide an easily calculated upper bound on $\GKdim(E(B))$. For further examples, see Section \ref{examples}.
\end{rmk}

It is also interesting to note that $E(A)$ has either exponential or
polynomial growth - this is the case for commutative $k$-algebras
(see \cite{Avramov,Gull1,Gull2}). Observe $\GKdim(E(A))=\GKdim(E(B))=1$
for algebras $A$ and $B$ from Example
\ref{ex1}.

Given a minimal projective resolution of $_Ak$, one can compute the Yoneda product of classes $\e_1$ and $\e_2$ in $E(A)$ by lifting a representative  of $\e_2$ through the resolution to the appropriate cohomology degree and composing with a representative of $\e_1$. For a monomial algebra $A$, we wish to describe the Yoneda product combinatorially in terms of walks in the graph $\Gamma(A)$. To do this, we introduce a notion of walk equivalence as a combinatorial analog of lifting a representative through a projective resolution.

 We call two walks $p=p_0\cdots p_n$ and $q=q_0\cdots q_m$ in a CPS graph $\Gamma(A)$ \emph{equivalent} if $m=n$ and 
 $$p_n\tensor p_{n-1}\tensor\cdots \tensor p_0=q_m\tensor q_{m-1}\tensor\cdots\tensor q_0$$ as elements of $M$. If $p$ and $q$ are equivalent, we write $p\sim q$. It is clear that $\sim$ is an equivalence relation on walks in $\Gamma(A)$.
 
 \begin{lemma}
 \label{pathEquivalence}
 Let $\Gamma(A)$ be a CPS graph, and let $p$ and $q$ be equivalent walks of length $n>0$ in $\Gamma(A)$. Then
 \begin{enumerate}
 \item the prefix walks $p_0\cdots p_{2k+1}$ and $q_0\cdots q_{2k+1}$ are equivalent for all $0\le k\le \lfloor \frac{n}{2}\rfloor $.
 \item if $n$ is even, then $p_n=q_n$.
 \item we have $p_{2k+1}\tensor p_{2k}=q_{2k+1}\tensor q_{2k}$ for all $0\le k\le  \lfloor \frac{n}{2}\rfloor $.
  \item if $\deg(p_0)\ge \deg(q_0)$, then 
 \begin{align*}
 \deg(p_i) &\ge \deg(q_i) \text{ if $0< i\le n$ is even }\\ 
 \deg(q_i) &\ge \deg(p_i) \text{ if  $0<i\le n$ is odd}
 \end{align*}
 \item the walk $q$ is unique if it is anchored.
   \end{enumerate}
 \end{lemma}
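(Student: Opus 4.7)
The plan is to reformulate walk equivalence combinatorially. Fix equivalent walks $p=p_0\cdots p_n$ and $q=q_0\cdots q_n$ with common product word $W=p_n\otimes\cdots\otimes p_0$ of tensor degree $N$, and let $b_i$ denote the position in $W$ at which $p_i$ begins (reading left to right), so $0=b_n<\cdots<b_0$ and $p_i$ occupies positions $b_i+1,\ldots,b_{i-1}$ (with $b_{-1}:=N$); define $c_i$ analogously. The crucial structural fact I will extract from the definitions of $\A_{p_i}$ and $L(\cdot,\cdot)$ is: each edge $p_i\to p_{i+1}$ corresponds to a minimal generator of $I$ that is a subword of $p_{i+1}\otimes p_i$ starting at $b_{i+1}+1$ and extending strictly past $b_i$, and no generator of $I$ is a subword of $p_{i+1}\otimes p_i$ whose starting position lies in the interior $(b_{i+1}+1,b_i]$ of $p_{i+1}$; the analogous statements hold for $q$.

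The hardest step will be proving $b_1=c_1$, the base case $k=0$ of (1). Assume $b_0\le c_0$ without loss of generality. To show $c_1\le b_1$, suppose $c_1>b_1$; then $c_1\in(b_1,b_0]$ (since $p_0\notin I$ forbids $c_1>b_0$), and any generator witnessing $W[c_1+1..N]\in I$ must extend past $c_0$ (by $W[c_1+1..c_0]\notin I$) and cannot lie entirely in $p_0$, so it starts strictly inside $p_1$ as a subword of $p_1\otimes p_0$, contradicting $p$-minimality on $p_0\to p_1$. To show $b_1\le c_1$, suppose $b_1>c_1$; the maximality of $c_1$ forces $W[b_1+1..c_0]\in I$, and $p$-minimality forces the witnessing generator to begin at $b_1+1$, which lies strictly inside $q_1$ and is a subword of $q_1\otimes q_0$, contradicting $q$-minimality on $q_0\to q_1$. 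For general $k$, I will induct on $k$: once $b_{2k-1}=c_{2k-1}$ is known, the sub-walks $(p_{2k},\ldots,p_n)$ and $(q_{2k},\ldots,q_n)$ are equivalent walks of length $n-2k<n$ sharing the common product $W[1..b_{2k-1}]$, so applying the outer induction hypothesis on $n$ to these sub-walks at their own $k=0$ case yields $b_{2k+1}=c_{2k+1}$.

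Parts (2), (3), (4), and (5) then follow formally. For (2), $n$ even means $n-1$ is odd, so $b_{n-1}=c_{n-1}$ by (1), and since $b_n=c_n=0$, both $p_n$ and $q_n$ equal $W[1..b_{n-1}]$. For (3), both $p_{2k+1}\otimes p_{2k}$ and $q_{2k+1}\otimes q_{2k}$ equal the subword $W[b_{2k+1}+1..b_{2k-1}]$ once (1) supplies the boundary equalities (using $b_{-1}=c_{-1}=N$ at $k=0$). For (4), under $b_0\le c_0$, I will induct on $k$ to show $b_{2k}\le c_{2k}$: the recursion determining $b_{2k+2}$ as the maximum $b<b_{2k+1}$ with $W[b+1..b_{2k}]\in I$ and $W[b+1..b_{2k+1}]\notin I$ is at least as restrictive as the analogous recursion for $c_{2k+2}$, using $b_{2k}\le c_{2k}$ and $b_{2k+1}=c_{2k+1}$; translating these boundary comparisons into degree comparisons yields the alternating pattern. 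For (5), if $q$ is anchored then $q_0\in\G_0$ is a single letter, so $c_0=N-1$ is pinned down and the recursion uniquely determines each subsequent $c_{i+1}$.
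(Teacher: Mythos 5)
Your proposal is correct in substance, and it takes a genuinely different route from the paper's. The paper argues multiplicatively in the language of $L$ and $\A$: assuming $\deg(p_0)\ge\deg(q_0)$, it writes $p_0=m\otimes q_0$, produces $m'$ with $q_1=m'\otimes m$, shows $m'\in\A_{p_0}$, and concludes $p_1=m'$ from $L$-minimality; parts (4) and (5) are then handled by separate factorization arguments of the same flavor. You instead make the division points $b_i,c_i$ in the common word $W$ the primary objects, and your placement lemma (every generator of $I$ occurring as a subword of $p_{i+1}\otimes p_i$ begins exactly at position $b_{i+1}+1$ and crosses $b_i$) is a faithful positional translation of the conditions $p_i,p_{i+1}\notin I$ and $L(p_{i+1},p_i)=p_{i+1}$, using explicitly that membership in a monomial ideal means containing a generator as a contiguous subword --- a fact the paper never unfolds. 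The payoff of your framework is the recursion it yields: $b_{i+1}=\max\{s : W[s+1..b_{i-1}]\in I\}$, so $b_{i+1}$ depends only on $b_{i-1}$. From this, (1) says the odd-indexed points agree because both walks satisfy the same recursion with the same seed $b_{-1}=c_{-1}=N$; (2) and (3) are formal consequences; (4) is monotonicity of the recursion in its argument (a word in $I$ stays in $I$ when extended on the right, so $b_{2k}\le c_{2k}$ propagates); and (5) is determinism of the recursion once anchoredness pins $c_0=N-1$. This unifies what the paper does part by part, at the modest cost of the positional bookkeeping.

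One clause is wrong as written, though the argument surrounding it is right. In the direction $b_1\le c_1$ you assert that ``the maximality of $c_1$ forces $W[b_1+1..c_0]\in I$''; nothing forces this --- the generator witnessing the $p$-edge relation ends at some position beyond $b_0$ that may well exceed $c_0$ --- and it is not what you need. The correct chain, which your next clause essentially carries out, is: by the $p$-edge, $W[b_1+1..N]\in I$ and its witnessing generator begins at $b_1+1$; since $c_1<b_1<b_0\le c_0$, that generator is a subword of $q_1\otimes q_0$ beginning strictly inside $q_1$, contradicting your placement lemma for the edge $q_0\to q_1$. Even more directly, your placement lemma gives $b_1=\max\{s : W[s+1..N]\in I\}=c_1$ outright, with no case split and no assumption on $\deg(p_0)$ versus $\deg(q_0)$; the same observation at each stage streamlines all of part (1). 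With that one repair, your proof is complete.
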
 
 
 \begin{proof}  To prove (1),
 we induct on $k$. Let $k=0$. By switching the variables $p$ and $q$ if necessary, there is no loss of generality in assuming $\deg(p_0)\ge \deg(q_0)$. Since $$p_n\tensor\cdots\tensor p_0=q_n\tensor\cdots\tensor q_0$$  there exists a unique monomial $m\in M-I$ such that $m\tensor q_0=p_0$. We have  $q_1\tensor q_0\in I$, $m\tensor q_0\notin I$, and 
 $$p_n\tensor\cdots\tensor p_1\tensor m\tensor q_0=q_n\tensor\cdots\tensor q_1\tensor q_0$$
 so there is a unique monomial $m'\in M-I$ such that $q_1=m'\tensor m$. Now, $m'\tensor p_0=m'\tensor m\tensor q_0=q_1\tensor q_0\in I$ and $L(q_1,q_0)=q_1$, so $L(m', p_0)=m'$ and $m'\in \A_{p_0}$. Thus
 \begin{align*}
 p_n\tensor\cdots\tensor p_1\tensor p_0 &= q_n\tensor\cdots\tensor q_1\tensor q_0\\
 &=q_n\tensor \cdots \tensor m' \tensor p_0
 \end{align*}
  and $p_1, m'\in\A_{p_0}$, so $p_1=m'$. Hence 
  $$p_1\tensor p_0=m'\tensor p_0=q_1\tensor q_0$$ 
  as desired. For the induction step, assume 
  $$p_{2k+1}\tensor\cdots\tensor p_0=q_{2k+1}\tensor\cdots\tensor q_0$$ and $\deg(p_{2k+2})\ge\deg(q_{2k+2})$ and proceed as in the base case. This completes the proof of (1).
 
  Statements (2) and (3) follow immediately from (1).
 
  We consider statement (4). In light of (2) and (3), it suffices to prove $\deg(q_i)\ge\deg(p_i)$ for $0<i\le n$ odd. Since $q_1\tensor q_0=p_1\tensor p_0$ and $\deg(p_0)\ge\deg(q_0)$, it is clear that $\deg(q_1)\ge \deg(p_1)$. Thus the result holds for $n\le 2$. Assume $n>2$ and for $0<i<n-1$ odd $\deg(q_i)\ge \deg(p_i)$. Since $q_i\tensor q_{i-1}=p_i\tensor p_{i-1}$, there exists $m\in M$ such that $q_i=p_i\tensor m$. Suppose toward contradiction that $\deg(p_{i+1})<\deg(q_{i+1})$. Since $q_{i+2}\tensor q_{i+1}=p_{i+2}\tensor p_{i+1}$, there exists $m'\in M$, $\deg(m')>0$ such that $q_{i+1}=m'\tensor p_{i+1}$. Since $p_{i+1}\tensor p_i\in I$, we have $p_{i+1}\tensor q_i=p_{i+1}\tensor p_i\tensor m\in I$. The fact that $\deg(m')>0$ contradicts the assumption that $L(q_{i+1},q_i)=q_{i+1}$. So $\deg(q_{i+1})\le \deg(p_{i+1})$ and hence $\deg(q_{i+2})\ge \deg(p_{i+2})$. Statement (4) now follows by induction.
 
  To prove (5), suppose $q'$ is another walk such that $p\sim q'$ and $q'_0\in \G_0$. Then $q\sim q'$ and $q_1\tensor q_0=q'_1\tensor q'_0$. Since $\G_0$ consists solely of degree 1 monomials, $q_0=q_0'$ so $q_1=q_1'$. 
 
 Suppose inductively that $q_i=q_i'$ for all $0\le i\le 2k+1<n$. If $n=2k+2$, the induction hypothesis and the definition of equivalence imply $q_{2k+2}=q_{2k+2}'$. 
 
 If $n>2k+2$, $q_{2k+3}\tensor q_{2k+2}=q_{2k+3}'\tensor q_{2k+2}'$. By switching the variables $q$ and $q'$ if necessary, we can assume $\deg(q_{2k+2})\ge \deg(q_{2k+2}')$,  so $q_{2k+2}=m\tensor q'_{2k+2}$ for some $m\in M$. But $L(q_{2k+2},q_{2k+1})=q_{2k+2}$ and
 $$q'_{2k+2}\tensor q_{2k+1}=q'_{2k+2}\tensor q'_{2k+1}\in I$$ by the induction hypothesis. Thus $m=1$, $q_{2k+2}=q_{2k+2}'$, and hence $q_{2k+3}=q_{2k+3}'$. Statement (5) now follows by induction.

 \end{proof}
 
 Since finite anchored walks in $\Gamma(A)$ enumerate a $k$-basis for $E(A)$, we make the following definition.
 
\begin{defn}
A finite walk in $\Gamma(A)$ is called \emph{admissible} if it is equivalent to an anchored walk.
\end{defn}
  
By Lemma \ref{pathEquivalence}(5), every admissible walk is equivalent to a unique anchored walk. In Example \ref{ex1}, edge $ab\rightarrow cd$ is an admissible walk of length 1 in both $\Gamma(A)$ and $\Gamma(B)$, but edge $cd\rightarrow ab$ is admissible in neither graph. In Phan's original weighted digraph, the edge weighting distinguished admissible edges from their counterparts. That distinction is too coarse for our purposes, but the importance of admissible edges seems evident from the following useful facts about admissible walks.
 
\begin{prop}
\label{pathExtensions}
Let $\Gamma(A)$ be a CPS graph, and let $p$ be an admissible walk of length $n$ in $\Gamma(A)$. Let $q$ be a walk of length $s$ such that $q$ extends $p$. If either $n$ or $s-n$ is even, then $q$ is admissible.
\end{prop}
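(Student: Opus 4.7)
The plan is to split into two cases according to the parity of $n$. In the first case, $n$ is even, so letting $p'$ denote the unique anchored walk equivalent to $p$ (which exists by admissibility together with Lemma \ref{pathEquivalence}(5)), Lemma \ref{pathEquivalence}(2) gives $p_n = p'_n$. I would then form the candidate $q' = p'_0, \ldots, p'_n, q_{n+1}, \ldots, q_s$; it is anchored because $p'_0\in\G_0$, and it is a walk in $\Gamma(A)$ because the two segments glue along the shared vertex $p'_n=p_n$. A direct tensor computation using $p\sim p'$ then yields $q'_s\otimes\cdots\otimes q'_0 = q_s\otimes\cdots\otimes q_0$, establishing $q\sim q'$ and the admissibility of $q$. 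This case is essentially bookkeeping.

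In the second case, $n$ is odd and $s-n$ is even, and I would induct on $(s-n)/2$. The base case $s=n$ is trivial; for the step with $s-n=2m$ and $m\ge 1$, the length-$(n+2(m-1))$ prefix of $q$ extends $p$ by $2(m-1)$ steps, so it is admissible by the inductive hypothesis, and then $q$ extends this prefix by two further steps. Since $n+2(m-1)$ is still odd, the whole induction reduces to the following key subcase: given an admissible walk $\alpha$ of odd length $\ell$ with anchored partner $\alpha'$ and a two-step extension $\beta=\alpha_0,\ldots,\alpha_\ell,\alpha_{\ell+1},\alpha_{\ell+2}$, construct an anchored walk $\beta'=\alpha'_0,\ldots,\alpha'_\ell,x,y$ of length $\ell+2$ with $\beta'\sim\beta$.

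For this subcase, the structural input comes from applying Lemma \ref{pathEquivalence}(3) with $k=(\ell-1)/2$ to obtain $\alpha_\ell\otimes\alpha_{\ell-1}=\alpha'_\ell\otimes\alpha'_{\ell-1}$, combined with Lemma \ref{pathEquivalence}(4) (using $\deg(\alpha_0)\ge 1=\deg(\alpha'_0)$) to conclude $\deg(\alpha'_\ell)\ge\deg(\alpha_\ell)$. This lets me write $\alpha'_\ell=\alpha_\ell\otimes\delta$ for some $\delta\in M$, with $\alpha_{\ell-1}=\delta\otimes\alpha'_{\ell-1}$. The required equivalence $\beta'\sim\beta$ then collapses, after using $\alpha\sim\alpha'$ to cancel the common right-hand tail, to the single requirement $y\otimes x=\alpha_{\ell+2}\otimes\alpha_{\ell+1}$ as monomials; so the task becomes splitting the monomial $\alpha_{\ell+2}\alpha_{\ell+1}$ into a right factor $x$ and a left factor $y$ satisfying $x\in\A_{\alpha'_\ell}$ and $y\in\A_x$. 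I would take $x$ to be the minimal suffix of $\alpha_{\ell+2}\alpha_{\ell+1}$ with $x\otimes\alpha'_\ell\in I$; the existence of such an $x$ follows because $\alpha_{\ell+2}\otimes\alpha_{\ell+1}\otimes\alpha'_\ell=\alpha_{\ell+2}\otimes\alpha_{\ell+1}\otimes\alpha_\ell\otimes\delta$ already contains the relation witnessing $\alpha_{\ell+2}\otimes\alpha_{\ell+1}\otimes\alpha_\ell\in I$.

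The hardest part of the argument is verifying that the complementary left factor $y$ lies in $\A_x$, that is, $L(y,x)=y$ and $y\in M-I$. The subtlety is that when $\delta$ is nontrivial the natural $x$ may be strictly shorter than $\alpha_{\ell+1}$; in the cyclic example $k\la a,b,c\ra/\la abc,bca,cab\ra$ with $\alpha=bc\to a$, for instance, $\alpha'_\ell=ab$ forces $x=c$ rather than $\alpha_{\ell+1}=a$, so $y$ is not simply $\alpha_{\ell+2}$. To close this gap, I expect to exploit the pair equality $\alpha_\ell\otimes\alpha_{\ell-1}=\alpha'_\ell\otimes\alpha'_{\ell-1}$ together with the $L$-minimality already built into the original walk (i.e.\ $L(\alpha_{\ell+2},\alpha_{\ell+1})=\alpha_{\ell+2}$ and $L(\alpha_{\ell+1},\alpha_\ell)=\alpha_{\ell+1}$) to rule out any relation inside $y\otimes x$ that would let the right-to-left anchored parse of the tensor terminate the chunk $y$ prematurely, thereby forcing exactly two additional chunks and producing the desired $\beta'$.
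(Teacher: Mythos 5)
Your plan follows the paper's proof step for step: the even-$n$ case is the same splice at the common terminal vertex (via Lemma \ref{pathEquivalence}(2)), and in the odd case your two new vertices are exactly the paper's --- your $x$ is its $r_{n+1}=L(q_{n+1},r_n)$ and your $y$ is its $r_{n+2}=q_{n+2}\tensor m'$. The gap is the step you explicitly defer: showing $y\in\A_x$. Half of that claim does follow from just the facts you name: if a proper suffix $w'$ of $y$ had $w'\tensor x\in I$, then either $w'$ lies inside $m'$, so that $w'\tensor x$ is a factor of $\alpha_{\ell+1}\notin I$, or $w'=u\tensor m'$ with $u$ a proper suffix of $\alpha_{\ell+2}$, so that $u\tensor \alpha_{\ell+1}\in I$ contradicts $L(\alpha_{\ell+2},\alpha_{\ell+1})=\alpha_{\ell+2}$; hence $L(y,x)=y$. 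What your sketch cannot deliver is the other condition for membership in $\A_x$, namely $y\in M-I$, because that assertion is false in general: no amount of $L$-minimality in the original walk rules out $y\in I$.

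Concretely, take $A=k\la a,c,e,f,g,h,t\ra/\la eat,\ cea,\ gfc,\ fce,\ hg\ra$ (a minimal set of monomial generators). The relations produce edges $t\rightarrow ea\rightarrow c\rightarrow gf\rightarrow h$ and $a\rightarrow ce\rightarrow f$ in $\Gamma(A)$ (for the only non-obvious one, $h\in\A_{gf}$ because $h\tensor gf=hg\tensor f\in I$ while $gf\notin I$), and $\A_f=\emptyset$ since no relation ends in $f$. The walk $p=(ea,c)$ is admissible, being equivalent to the anchored walk $(a,ce)$, and $q=(ea,c,gf,h)$ extends it by two edges, so with $n=1$, $s=3$ the proposition would make $q$ admissible. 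Your construction gives $\delta=e$, then $x=L(gf,ce)=f$ (as $fce\in I$), $m'=g$, and $y=h\tensor g=hg$, which is a defining relation: $y\in I$, so $y$ is not even a vertex of $\Gamma(A)$. And indeed $q$ is not admissible: an equivalent anchored walk would have to parse the monomial $hgfcea$ from the right as $r_0=a$, $r_1=ce$, $r_2=f$, and then it is stuck because $\A_f=\emptyset$. So the deferred step genuinely fails, and with it the statement itself in this generality. You have in fact put your finger on the soft spot of the paper's own argument, which asserts $q_{n+2}\tensor m'\in\A_{r_{n+1}}$ from precisely the two facts you cite while silently ignoring the requirement $q_{n+2}\tensor m'\notin I$; but flagging the hard step and gesturing at ingredients is not a proof, and here the step cannot be repaired.
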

 
\begin{proof}
An admissible walk of length 0 consists of a single vertex in $\G_0$, so the statement is trivial if $n=0$. The statement is also trivial if $s=n$. So assume $n>0$, $s-n>0$, and let $r$ be a path in $\Gamma(A)$ such that $p\sim r$ and $r_0\in \G_0$. 

If $n$ is even, then $r_n=p_n$ by Lemma \ref{pathEquivalence}(2). It follows immediately that the path $r'=r_0'\cdots r_{s}'$ given by $r_i'=r_i$ for $0\le i\le n$ and $r'_i=q_{i}$ for $n+1\le i\le s$ is equivalent to $q$ and has $r'_0\in \G_0$. 

Suppose $n$ and $s$ are odd. If $r_n=p_n$, we can proceed as above, so assume $r_n\neq p_n$. By Lemma \ref{pathEquivalence}(3) and (4), there exists a monomial $m\in M$, $\deg(m)>0$ such that  $r_n=p_n\tensor m=q_n\tensor m$. Thus $q_{n+1}\tensor r_n\in I$. Put $r_{n+1}=L(q_{n+1},r_n)$ and let $m'\in M$ such that $q_{n+1}=m'\tensor r_{n+1}$. Now, $$q_{n+2}\tensor m'\tensor r_{n+1}=q_{n+2}\tensor q_{n+1}\in I$$ Since $q_{n+1}\notin I$ and $L(q_{n+2}, q_{n+1})=q_{n+2}$, it follows that $q_{n+2}\tensor m'\in \A_{r_{n+1}}$. Put $r_{n+2}=q_{n+2}\tensor m'$. Then by construction, $r_0\cdots r_{n+2}$ is a well-defined walk equivalent to $p'=q_0\cdots q_{n+2}$ and $r_0\in \G_0$. Thus $p'$ is an admissible walk of length $n+2$ and $q$ is an extension of $p'$ of length $s$. The result now follows by induction on $s-n$.

\end{proof}

We show in the next section that $E(A)$ is finitely generated if $\Gamma(A)$ has ``enough'' admissible walks. To make this more precise, we make the following definition.

\begin{defn}
Let $p$ be an infinite walk in $\Gamma(A)$ and let $e=p_ip_{i+1}$ be an admissible edge in $p$. We call $e$ \emph{dense} in $p$ if $e$ has an admissible even-length extension in $p$.
\end{defn}

In Example \ref{ex1}, $c\rightarrow ab\rightarrow cd\rightarrow ab\rightarrow cd\cdots$ is the only infinite anchored walk in $\Gamma(A)$.
The admissible edge $ab\rightarrow cd$ is dense in this walk since
$$ab\rightarrow cd\rightarrow ab\qquad \sim\qquad b\rightarrow cda\rightarrow ab$$
However, the edge $ab\rightarrow cd$ is not dense in the same walk in $\Gamma(B)$. The equivalent anchored walks corresponding to odd-length extensions of $ab\rightarrow cd$ begin at vertex $b$ and end at either $b$ or $cda$. It follows that no even length extension of $ab\rightarrow cd$ is admissible because condition (2) of Lemma \ref{pathEquivalence} cannot be satisfied.

An admissible edge $e$ may belong to many infinite walks. The edge $e$ may be dense in some infinite walks, but not others. Furthermore, $e$ may not be dense in an infinite walk $w$, but $w$ may contain some other dense edge. See Example \ref{infGK}.

The following criterion for establishing density is immediate from Lemma \ref{pathEquivalence}(2). 

\begin{lemma}
\label{density}
Let $w$ be a (possibly infinite) walk in $\Gamma(A)$ and let $e=w_iw_{i+1}$ be an admissible edge in $w$. Let $q$ be any odd-length extension of $e$ in $w$ and let $q'$ be the unique anchored walk equivalent to $q$. Then every even-length extension of $q$ in $w$ is admissible if and only if $q'_t=w_{i+t}$ for some even $t\ge 0$.

\end{lemma}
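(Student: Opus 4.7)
The plan is to prove both implications from Lemma~\ref{pathEquivalence} (primarily parts (1), (2), (5)) together with Proposition~\ref{pathExtensions}. For $(\Leftarrow)$, suppose $q'_t = w_{i+t}$ for some even $t \ge 0$. I will first argue that the initial subwalk $w_i w_{i+1}\cdots w_{i+t}$ is admissible. When $t=0$ this is immediate since $w_i = q'_0 \in \G_0$; when $t \ge 2$, Lemma~\ref{pathEquivalence}(1) applied to $q \sim q'$ gives equivalence of their length-$(t-1)$ prefixes (an odd length), and tensoring on the common terminal vertex $q_t = q'_t$ extends this to equivalence of their length-$t$ prefixes, which is anchored on the $q'$ side. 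Any even-length extension $w_i \cdots w_{i+m}$ of $q$ in $w$ then has $m$ even with $m - t$ even, so Proposition~\ref{pathExtensions} applied to the admissible walk $w_i \cdots w_{i+t}$ delivers admissibility of $w_i \cdots w_{i+m}$.

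For $(\Rightarrow)$, suppose every even-length extension of $q$ in $w$ is admissible. Consider the shortest such extension $p = w_i \cdots w_{i+|q|+1}$, which has even length $|q|+1$ (recall $|q|$ is odd). Let $p'$ denote its unique anchored equivalent. By Lemma~\ref{pathEquivalence}(1) the length-$|q|$ prefixes of $p$ and $p'$ (odd length) are equivalent, and since the length-$|q|$ prefix of $p'$ is anchored, the uniqueness clause of Lemma~\ref{pathEquivalence}(5) forces it to equal $q'$. Then Lemma~\ref{pathEquivalence}(2) applied to the even-length equivalence $p \sim p'$ yields $p'_{|q|+1} = p_{|q|+1} = w_{i+|q|+1}$, so taking $t = |q|+1$ (even) gives the required matching, where $q'$ is viewed as canonically extended by one vertex through $p'$.

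The forward direction should be routine once the admissibility of the initial length-$t$ prefix is secured. The most delicate point lies in the backward direction: since the index $t=|q|+1$ produced naturally by this argument exceeds $|q'|$, reconciling it with the statement's reference to $q'_t$ requires interpreting $q'$ as canonically extended through the anchored equivalents of admissible even-length extensions of $q$. If this convention is not the intended one, I would instead iterate the construction with progressively longer admissible even-length extensions of $q$, exploiting finiteness of $\Gamma(A)$ to force periodicity together with the degree alternation of Lemma~\ref{pathEquivalence}(3)--(4) to locate an even index within $q$'s own range at which $q'$ and $w$ agree.
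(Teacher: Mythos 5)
The paper gives no proof of this lemma at all---it simply declares it ``immediate from Lemma \ref{pathEquivalence}(2)''---so the only question is whether your argument is correct, and in substance it is. Your ``if'' half (prefix equivalence from Lemma \ref{pathEquivalence}(1), tensoring on the common vertex $q_t=q'_t$, then Proposition \ref{pathExtensions} from the even-length admissible prefix) is exactly right, and your ``only if'' half (apply Lemma \ref{pathEquivalence}(1) and (5) to see the anchored equivalent $p'$ of the shortest even-length extension extends $q'$, then Lemma \ref{pathEquivalence}(2) to get $p'_{|q|+1}=w_{i+|q|+1}$) is also correct.

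The caveat you raise at the end is the real crux, and you should commit to your main argument and discard the fallback: the ``canonically extended'' reading of $q'_t$ is not a convention one may decline---it is forced, because under the literal reading (even $t$ with $0\le t\le |q|$) the ``only if'' implication is \emph{false}. A counterexample sits inside the paper's own Example \ref{infGK}. Let $u$ be the infinite anchored walk $p\rightarrow wxyz\rightarrow pq\rightarrow wxyz\rightarrow pq\rightarrow\cdots$, let $e=u_1u_2$ and $q=u_1u_2u_3u_4$ (length $3$). The paper exhibits the equivalence of $wxyz\rightarrow pq\rightarrow wxyz\rightarrow pq\rightarrow wxyz$ with the anchored walk $z\rightarrow pqwxy\rightarrow xyz\rightarrow pqw\rightarrow wxyz$, so this length-$4$ extension of $q$ is admissible, and by Proposition \ref{pathExtensions} so is every longer even-length extension of $q$ in $u$: the left side of the biconditional holds. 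But by Lemma \ref{pathEquivalence}(1) and (5), $q'=z\rightarrow pqwxy\rightarrow xyz\rightarrow pqw$, and $q'_0=z\neq u_1$, $q'_2=xyz\neq u_3$; the first agreement occurs at $t=4=|q|+1$, exactly where your argument finds it. So your proposed alternative---iterating with longer extensions and using periodicity to force agreement at an even index ``within $q$'s own range''---is an attempt to prove a false statement and cannot be repaired. One further caution: if instead you let $t$ range without restriction in \emph{both} directions, the ``if'' direction breaks (take $q=e$ in the same walk: agreement holds at $t=4$, yet the length-$2$ extension $wxyz\rightarrow pq\rightarrow wxyz$ is not admissible, since $L(wxyz,pqwxy)=xyz$ kills the needed edge $pqwxy\rightarrow wxyz$). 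Your proof sidesteps this because your ``if'' half uses only $t\le|q|$ while your ``only if'' half produces $t=|q|+1$; that asymmetric pair of one-sided implications is the true content of the lemma, and it is precisely the pair invoked in the proof of Theorem \ref{finiteCheck}.
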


\section{Multiplicative Structure}
\label{multStructure}

 In this section we show certain extensions of walks in $\Gamma(A)$ correspond to Yoneda products in $E(A)$ and use the result to combinatorially characterize finite generation of $E(A)$.

 Recall that if $w$ is an anchored walk of length $n$ in $\Gamma(A)$, we denote the corresponding $A$-basis element of $P_{n+1}$ by $e_w$. We denote the graded dual of $e_w$ by $\e_w$.
 
 Fix an anchored walk $q$ of length $n$. To connect the Yoneda product in $E(A)$ to extensions of walks in $\Gamma(A)$, we explicitly construct lifts of $\e_q$ through the resolution $(P_{\bullet}, d_{\bullet})$ defined in Section \ref{CPSgraph}. We need one additional definition before describing the construction.

 \begin{defn}[\cite{CS}]
 An element $r$ in an ideal $I\subset T(V)$ is called \emph{essential} if $r$ is not in the ideal generated by $V\tensor I+I\tensor V$.
 \end{defn}
 
 We note that a monomial $r$ in a monomial ideal $I$ is essential if and only if $r$ is a minimal generator of $I$. Hence a walk $w_0w_1$ in $\Gamma(A)$ is admissible if and only if $w_1\tensor w_0$ is a minimal generator of $I$.
 
 For $i\ge 0$, we define $A$-module maps $f_i$ such that the following diagram commutes.

\begin{equation}
\label{lifts}
\xymatrix{
\cdots \ar@{->}[r] & P_{n+3}\ar@{->}[r]^{d_{n+3}}\ar@{->}[d]_{f_2} & P_{n+2}\ar@{->}[r]^{d_{n+2}}\ar@{->}[d]_{f_1} & P_{n+1}\ar@{->}[d]_{f_0}\ar@{->}[dr]^{\e_q} & \\
\cdots\ar@{->}[r] & P_{2}\ar@{->}[r]^{d_{2}} & P_{1}\ar@{->}[r]^{d_{1}} & P_{0}\ar@{->}[r] & k \\
}
\end{equation}
\bigskip

For $i\ge 0$ let $Q_{n+i}$ be the graded free submodule of $P_{n+i}$ spanned by the set $\{e_r : q\vdash r\}$ and let $Z_{n+i}$ be the complement to $Q_{n+i}$ in $P_{n+i}$. We observe that $P_{\ge n}=Q_{\ge n}\oplus Z_{\ge n}$ as complexes of graded free left $A$-modules. For all $i\ge 0$, we define $f_i(Z_{n+i+1})=0$. 

We define $f_i$ on the specified $A$-basis of $Q_{n+i+1}$ in several steps.
\begin{enumerate}
\item[(i)] Define $f_0(e_q)=e_{\emptyset}$.
\item[(ii)] For any walk $r$ such that $e_r\in Q_{n+2}$, we have $r_{n+1}=m\tensor x_j$ for a unique $m\in M$ and generator $x_j$. Define  $f_1(e_r) = \pi(m)e_{x_j}$. 
\item[(iii)] 
Suppose $d>2$ and $r$ is a walk such that $e_r\in Q_{n+d}$.
Recall the walk $r_{n+1}r_{n+2}$ is admissible if and only if $r_{n+2}\tensor r_{n+1}$ is essential. (See Remark \ref{G0edge}.) If $r_{n+2}\tensor r_{n+1}$ is not essential, we define $f_{d-1}(e_r)=0$. If $r_{n+2}\tensor r_{n+1}$ is essential, our construction depends on the partiy of $d$.

If $d$ is odd and $r_{n+1}r_{n+2}$ is admissible, the walk $r_{n+1}\cdots r_{n+d-1}$ of length $d-2$ is admissible by Proposition \ref{pathExtensions}. Let $r'=r_{0}'\cdots r_{d-2}'$ be the unique anchored walk equivalent to $r_{n+1}\cdots r_{n+d-1}$. Then $e_{r'}\in P_{d-1}$ and we define $f_{d-1}(e_r)= e_{r'}$.

If $d$ is even and $r_{n+1}r_{n+2}$ is admissible, then the length $d-3$ walk $r_{n+1}\cdots r_{n+d-2}$ is admissible. Let $r_0'\cdots r_{d-3}'$ be the equivalent anchored walk. Lemma \ref{pathEquivalence}(3) and (4) imply that there exists a unique monomial $m\in M$ such that $r_{n+d-2}\tensor m=r_{d-3}'$. Since $r_{n+d-1}\tensor r_{n+d-2}\in I$, we have $r_{n+d-1}\tensor r_{d-3}'\in I$. Put $r_{d-2}'=L(r_{n+d-1},r_{d-3}')$ and let $m'\in M$ be the unique monomial such that $r_{n+d-1}=m'\tensor r_{d-2}'$. Then $r'=r_0'\cdots r_{d-2}'$ is a well-defined anchored walk, $e_{r'}\in P_{d-1}$, and we may define $f_{d-1}(e_r) =  \pi(m')e_{r'}$.
\end{enumerate}

To summarize: For $d>0$ and $r$ a walk such that $e_r\in Q_{n+d}$, we define

$$ f_{d-1}(e_r)=
\begin{cases}
e_{\emptyset} & \text{ if } d=1\\
\pi(m)e_{x_j} & \text{ if $d=2$ and $r_{n+1}=m\tensor x_j$} \\
e_{r'} & \text{ if $r_{n+2}\tensor r_{n+1}$ is essential and $d>2$ odd}\\
\pi(m')e_{r'} &  \text{ if $r_{n+2}\tensor r_{n+1}$ is essential and $d>2$ even}\\ 
0 & \text{ else}\\
\end{cases}
$$
where $r'$ is a uniquely determined anchored walk of length $d-2$ and $r_{n+d-1}=m'\tensor r_{d-2}'$. Extending the definitions of the $f_i$ $A$-linearly, we obtain a sequence of $A$-module maps.

\begin{lemma}
With $f_i$ defined as above, the diagram (\ref{lifts}) commutes. 
\end{lemma}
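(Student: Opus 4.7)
The plan is to verify commutativity of each square by evaluating on the $A$-basis elements $e_r$ of $P_{n+d}$, where $r$ ranges over anchored walks of length $n+d-1$. I split cases using the decomposition $P_{n+d} = Q_{n+d} \oplus Z_{n+d}$. When $e_r \in Z_{n+d}$ (so $q \not\vdash r$), the truncation $\bar r$ still has length at least $n$ and fails to contain $q$ as a prefix, so $d_{n+d}(e_r) = \pi(r_{n+d-1}) e_{\bar r} \in Z_{n+d-1}$, and both $f_{d-1}(e_r)$ and $f_{d-2}(d_{n+d}(e_r))$ vanish. The rightmost triangle holds because $\e_q$ is the dual basis element: for $r = q$, $f_0(e_q) = e_\emptyset$ maps to $1$ under augmentation, while for $r \neq q$ (an anchored walk of length $n$), $e_r \in Z_{n+1}$ gives zero on both sides.

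The substantive cases have $q \vdash r$. The cases $d = 1, 2$ follow directly from the definitions and the identity $d_1(e_{x_j}) = \pi(x_j) e_\emptyset$. For $d \geq 3$, I split on whether $r_{n+2} \tensor r_{n+1}$ is essential and on the parity of $d$. When this monomial is essential, the construction of $f_{d-1}(e_r)$ uses the unique anchored walk $r'$ equivalent to $r_{n+1}\cdots r_{n+d-1}$, guaranteed by Proposition \ref{pathExtensions} together with Lemma \ref{pathEquivalence}(5). Commutativity of the square reduces to comparing $r'$ with the analogous anchored walk associated to the corresponding subwalk of $\bar r$. Parts (2)--(4) of Lemma \ref{pathEquivalence} ensure that these anchored walks agree on an initial segment, with any discrepancy at the terminal position absorbed into the multiplicative coefficient produced by the differential. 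The two parity sub-cases dovetail with the parity sub-cases in the definition of $f_{d-1}$.

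I expect the most delicate step to be the sub-case in which $r_{n+2} \tensor r_{n+1}$ is not essential, so that $f_{d-1}(e_r) = 0$. For $d > 3$, the non-essentiality persists when we pass to $\bar r$, and the same clause in the definition of $f_{d-2}$ gives zero. For $d = 3$, however, $f_1$ has no essentiality condition, and one must show directly that $\pi(r_{n+2} \tensor m) = 0$ in $A$, where $r_{n+1} = m \tensor x_j$. This follows from a close reading of the minimality condition: non-essentiality forces a minimal generator $w_k$ of $I$ to sit as a proper sub-monomial of $r_{n+2} \tensor r_{n+1}$; since $r_{n+1}, r_{n+2} \notin I$, the generator $w_k$ must straddle the boundary; and the condition $L(r_{n+2}, r_{n+1}) = r_{n+2}$ forbids $w_k$ from having a proper suffix of $r_{n+2}$ as its left part, so $w_k = r_{n+2} \tensor w'$ with $w'$ a proper prefix of $r_{n+1}$ and in fact a prefix of $m$, giving $r_{n+2} \tensor m \in I$. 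Handling this sub-case cleanly while keeping the parity bookkeeping organized is where I anticipate the main obstacle.
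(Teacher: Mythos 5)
Your proposal is correct and takes essentially the same route as the paper's proof: reduce to the subcomplex $Q_{>n}$ since the $f_i$ kill $Z_{>n}$ and the differential preserves it, check the triangle and the $d=1,2$ squares directly, and for $d\ge 3$ split on essentiality of $r_{n+2}\tensor r_{n+1}$ and the parity of $d$, comparing the anchored walk $r'$ with the one produced from $\bar r$ via uniqueness of equivalent anchored walks (Lemma \ref{pathEquivalence}). In fact, in the non-essential $d=3$ sub-case your straddling-generator argument showing $\pi(r_{n+2}\tensor m)=0$ is spelled out in more detail than the paper, which disposes of it with the single remark ``since $r_{n+2}\in\A_{r_{n+1}}$.''
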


\begin{proof} Because $f_i(Z_{n+i+1})=0$ for all $i\ge 0$ and $Z_{>n}$ is a subcomplex of $P_{>n}$, it suffices to show commutativity for the complex $Q_{>n}$. We compute the first few squares explicitly.

\begin{itemize}
\item[(d=1)] The augmentation map $\epsilon : P_0\rightarrow k$ takes $e_{\emptyset}\mapsto 1$, so $\e_q=\epsilon f_0$.

\item[(d=2)] Let $r$ be a walk of length $n+1$ in $\Gamma(A)$ which extends $q$. Then 
$$f_0d_{n+2}(e_r)=f_0(\pi(r_{n+1})e_q)=\pi(r_{n+1})e_{\emptyset}$$ 
On the other hand, $r_{n+1}=m\tensor x_j$ for unique $m\in M$ and generator $x_j$ so
$$d_1f_1(e_r)=d_1(\pi(m)e_{x_j}) = \pi(m)\pi(x_j)e_{\emptyset} = \pi(r_{n+1})e_{\emptyset}$$

\item[(d=3)] Let $r$ be a walk of length $n+2$ in $\Gamma(A)$ which extends $q$. If $r_{n+2}\tensor r_{n+1}$ is essential, then $$d_{2}f_{2}(e_r)=d_{2}(e_{r'})=\pi(r'_{1})e_{\overline{r'}}$$
where $r'=r'_0r_1'$ is anchored, equivalent to $r_{n+1}r_{n+2}$, and $\overline{r'}=r_0'$. On the other hand,
$$f_{1}(d_{n+3}(e_r))=f_{1}(\pi(r_{n+2})e_{\bar{r}})=\pi(r_{n+2})\pi(m)e_{x_j}=\pi(r_{n+2}\tensor m)e_{x_j}$$
where $\bar{r}=r_0\cdots r_{n+1}$ and $r_{n+1}=m\tensor x_j$. In this case, since $r'_0\in\G_0$ and $r'_1\tensor r'_0=r_{n+2}\tensor r_{n+1}$, we have $r'_0=x_j$ and $r'_1=r_{n+2}\tensor m$ as desired. 

If $r_{n+2}\tensor r_{n+1}$ is not essential, $f_{2}(e_r)=0$ and $\pi(r_{n+2}\tensor m)=0$ since $r_{n+2}\in\A_{r_{n+1}}$.

\end{itemize}

For $d>3$, the arguments are similar to those above and are omitted. The key observation is that $r'_0\cdots r'_{d-4}$ is equivalent to $r_{n+1}\cdots r_{n+d-3}$ by Lemma \ref{pathEquivalence}(1) so $\bar{r}'=r'_0\cdots r'_{d-4}$ by uniqueness (Lemma \ref{pathEquivalence}(5)). The definitions of the $f_i$ then imply  $r_{n+d-2}=m'\tensor r'_{d-3}$ if $d$ is odd and $r_{n+d-1}=m'\tensor r'_{d-2}$ if $d$ is even, from which commutativity follows.

%
%
%
%
%


\end{proof}

If $\alpha, \beta\in E(A)$, we denote the Yoneda composition product by $\alpha\star\beta$. If $w$ is an admissible walk of length $m$ in $\Gamma(A)$ (not necessarily anchored), we define the symbol $\e_w$ to mean the dual basis element $\e_q$ where $q$ is the unique anchored walk equivalent to $w$ guaranteed by Lemma \ref{pathEquivalence}. The following proposition provides a combinatorial description of the Yoneda product.

\begin{prop}
\label{pathProducts}
Let $p=p_0\cdots p_s$ and  $q=q_0\cdots q_n$ be admissible walks in $\Gamma(A)$. Then $\e_p\star \e_q = 0$ unless there exist walks $p'\sim p$ and $q'\sim q$ such that $q'$ is anchored and $q'_n\rightarrow p'_0$ is an edge in $\Gamma(A)$. In that case $\e_p\star \e_q = \e_{w}$ where $w\sim q'_0\cdots q'_np'_0\cdots p'_s$.
\end{prop}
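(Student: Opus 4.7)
The plan is to compute $\e_p \star \e_q$ by exploiting the explicit lifts $f_i$ of $\e_{\tilde q}$ through $(P_\bullet, d_\bullet)$ constructed above, where $\tilde q$ denotes the unique anchored walk equivalent to $q$ (cf.\ Lemma \ref{pathEquivalence}(5)). Since the symbol $\e_q$ means $\e_{\tilde q}$ by definition, the Yoneda composition is represented by the cocycle $\e_p \circ f_{s+1} \in \Hom(P_{n+s+2}, k)$. Expanding in the dual basis, I write $\e_p \star \e_q = \sum_r c_r \, \e_r$, where $r$ ranges over anchored walks of length $n+s+1$ in $\Gamma(A)$ and $c_r = \e_p(f_{s+1}(e_r))$. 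The proof then proceeds by characterizing when $c_r \neq 0$, computing $c_r$, and showing the contributing $r$ is unique and agrees with the $w$ in the statement.

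First I will read off from the definition of $f_{s+1}$ that $c_r = 0$ unless $r$ extends $\tilde q$ (so $e_r \in Q_{n+s+2}$) and the edge $r_{n+1}r_{n+2}$ is admissible (so $r_{n+2}\tensor r_{n+1}$ is essential). When both hold, $f_{s+1}(e_r) = \alpha \, e_{r'}$ for an anchored walk $r'$ of length $s$ and an element $\alpha \in A$ specified by the construction. Applying $\e_p$ picks out the coefficient of $e_{\tilde p}$, where $\tilde p$ is the unique anchored walk equivalent to $p$, so $c_r \neq 0$ requires both $r' = \tilde p$ and $\epsilon(\alpha) = 1$.

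Next I will verify that in both parity cases, $r'$ coincides with the unique anchored walk equivalent to the suffix $r_{n+1}\cdots r_{n+s+1}$ (admissibility of this suffix follows from Proposition \ref{pathExtensions} applied to the admissible length-$2$ prefix $r_{n+1}r_{n+2}$). The odd-parity case is immediate. In the even-parity case, which I expect to be the main technical work, the point is to show that the monomial coefficient $\alpha = \pi(m')$ reduces to $1$. Using Lemma \ref{pathEquivalence}(1), (2), and (5), one identifies the length-$(s-1)$ prefix of the anchored equivalent of the suffix with $r_0'\cdots r_{s-1}'$ and its final vertex with $r_{n+s+1}$; since this anchored walk is an honest walk in $\Gamma(A)$, its last vertex lies in $\A_{r_{s-1}'}$, which forces $L(r_{n+s+1},r_{s-1}') = r_{n+s+1}$ and hence $m' = 1$. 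Thus $c_r \in \{0,1\}$, and $c_r = 1$ precisely when $r$ extends $\tilde q$ and $r_{n+1}\cdots r_{n+s+1} \sim p$.

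Finally, I will establish uniqueness and match the statement. If $r$ satisfies the nonvanishing conditions, then $r$ is an anchored walk equivalent to the concatenation $\tilde q_0 \cdots \tilde q_n p'_0 \cdots p'_s$ with $p' = r_{n+1}\cdots r_{n+s+1}$; Lemma \ref{pathEquivalence}(5) then implies $r$ is uniquely determined. Conversely, given any $p' \sim p$ with $\tilde q_n \rightarrow p'_0$ an edge, the anchored equivalent of $\tilde q \cdot p'$ supplies such an $r$, and its length-$n$ prefix equals $\tilde q$ by applying Lemma \ref{pathEquivalence}(1) to a sufficiently long odd-length prefix. Consequently, $\e_p \star \e_q = 0$ unless there exist walks $p' \sim p$ and $q' = \tilde q \sim q$ with $q'_n \rightarrow p'_0$ an edge, in which case $\e_p \star \e_q = \e_w$ for $w \sim q'_0 \cdots q'_n p'_0 \cdots p'_s$, as claimed.
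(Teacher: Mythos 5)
Your framework is the same as the paper's (the lifts $f_i$, the expansion $c_r=\e_p(f_{s+1}(e_r))$ over anchored walks $r$ of length $n+s+1$, and the necessary conditions $r'=\tilde p$ and $\epsilon(\alpha)=1$), and your final characterization of when $c_r=1$ is the correct one. The gap is in your treatment of the even-parity case. You assert that the suffix $r_{n+1}\cdots r_{n+s+1}$ is admissible by Proposition \ref{pathExtensions}, and that one can then \emph{prove} $m'=1$. Both assertions are false when $s$ is even: Proposition \ref{pathExtensions} requires either the length of the admissible prefix (here $1$) or the length of the extension (here $s-1$) to be even, and neither is. An even-length extension of an admissible edge need not be admissible --- this is exactly the dense/non-dense distinction on which Theorem \ref{mainTheorem} turns, and if your claim were true every admissible edge would be dense. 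Concretely, in $\Gamma(B)$ of Example \ref{ex1} take the anchored walk $q=c$ (so $n=0$), $s=2$, and $r= c\rightarrow ab\rightarrow cd\rightarrow ab$. The edge $ab\rightarrow cd$ is admissible and $ab\rightarrow cd\ \sim\ b\rightarrow cda$, but the construction in step (iii) gives $r'_2=L(ab,cda)=b$ and $m'=a\neq 1$, so $f_{3}(e_r)=\pi(a)\,e_{r'}$ with $r'=b\rightarrow cda\rightarrow b$; the suffix $ab\rightarrow cd\rightarrow ab$ is not admissible (the paper notes that no even-length extension of $ab\rightarrow cd$ is admissible in $\Gamma(B)$). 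Following your intermediate reasoning one would conclude $c_r=1$, i.e.\ $\e_{b\rightarrow cda\rightarrow b}\star\e_{c}=\e_r\neq 0$, whereas in fact $c_r=\e_{r'}(\pi(a)e_{r'})=0$ because $\pi(a)$ has positive degree and $\e_{r'}$ is the graded dual. (Your own concluding sentence, which tests whether the suffix is equivalent to $p$, also gives $0$ here, so your write-up contradicts itself on this example.)

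The repair is the one the paper makes: do not try to prove $m'=1$; instead retain $\epsilon(\alpha)=1$, i.e.\ $m'=1$, i.e.\ $r'_s=r_{n+s+1}$, as part of the nonvanishing condition, and observe that this condition is precisely what forces the suffix to be equivalent to $r'$ (hence to $p$) when $s$ is even. Your $\A_{r'_{s-1}}$-argument is sound as the converse implication: \emph{if} the suffix is admissible (for instance because it is equivalent to $p$), then by Lemma \ref{pathEquivalence}(1), (2), and (5) its anchored equivalent agrees with $r'_0\cdots r'_{s-1}$ and ends at $r_{n+s+1}$, and since that vertex lies in $\A_{r'_{s-1}}$ one gets $L(r_{n+s+1},r'_{s-1})=r_{n+s+1}$ and $m'=1$. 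With the necessity direction corrected in this way, your two halves assemble into exactly the paper's proof; the remainder of your argument (uniqueness of the contributing $r$ via Lemma \ref{pathEquivalence}(5), and matching with the statement of the proposition) is fine.
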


\begin{proof}
 By Lemma \ref{pathEquivalence}(5) it suffices to consider the case where $q$ is anchored. For $i\ge 0$, let $f_i$ be defined as above. By definition of Yoneda composition product, $\e_p\star\e_q=\e_pf_{s+1}$. Let $r$ be any anchored walk of length $n+s+1$. If $r$ does not extend $q$, then $\e_pf_{s+1}(e_r)=0$. If $r$ extends $q$, then 
$$\e_pf_{s+1}(e_r)=
\begin{cases}
\e_p(\pi(m)e_{x_j}) & \text{ if $s=0$ and $r_{n+1}=m\tensor x_j$} \\
\e_p(e_{r'}) & \text{if $s>0$ is odd and $r_{n+2}\tensor r_{n+1}$ is essential}\\
\e_p(\pi(m')e_{r'}) & \text{if $s>0$ is even and $r_{n+2}\tensor r_{n+1}$ is essential}\\
0 & \text{else}\\
\end{cases}
$$
where $r'$ and $m'$ are defined as in (iii) above. Thus $\e_pf_{s+1}(e_r)=0$ unless $r$ extends $q$, $p\sim r'$, and, 
if $s$ is even, $r'_s=r_{n+s+1}$. The last condition implies $r'\sim r_{n+1}\cdots r_{n+s+1}$ when $s$ is even. (This equivalence always holds when $s$ is odd.) So if $\e_pf_{s+1}(e_r)\neq 0$, we have $\e_pf_{s+1}(e_r)=1$ and it follows that $\e_p\star\e_q=\e_w$ where $w=q_0\cdots q_n r_{n+1}\cdots r_{n+s+1}$. Since $p\sim r'\sim r_{n+1}\cdots r_{n+s+1}$, setting $p'=r_{n+1}\cdots r_{n+s+1}$ gives the desired result.

\end{proof}

We call a class $\alpha\in E^i(A)$ for $i>0$ \emph{decomposable} if $\alpha$ is in the subalgebra of $E(A)$ generated by $\bigoplus_{j<i} E^{j}(A)$. Otherwise we call $\alpha$ \emph{indecomposable}. Proposition \ref{pathProducts} illustrates a nice feature of our chosen $k$-basis for $E(A)$. 

\begin{cor}
\label{decompClasses}
If $w$ is an anchored walk of length $n$ in $\Gamma(A)$, then $\e_w$ is decomposable if and only if there exists an admissible walk $p=p_0\cdots p_m$ such that $w=w_0\cdots w_ip_0\cdots p_m$ for some $0\le i<n$.
\end{cor}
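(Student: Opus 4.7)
The plan is to derive this corollary directly from Proposition \ref{pathProducts}, together with the uniqueness assertion in Lemma \ref{pathEquivalence}(5) and the linear independence of the dual basis $\{\e_u\}$ guaranteed by Proposition \ref{pathBasis}. Since $\e_w \in E^{n+1}(A)$, decomposability means $\e_w$ lies in the subalgebra generated by $\bigoplus_{j \le n} E^j(A)$; by associativity and the grading, this is equivalent to $\e_w$ being a $k$-linear combination of products $\e_{p_k}\star \e_{q_k}$ with $p_k,q_k$ anchored of length strictly less than $n$.

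For the easy direction, suppose $w = w_0\cdots w_i p_0\cdots p_m$ with $p=p_0\cdots p_m$ admissible and $0\le i<n$. I would set $q = w_0\cdots w_i$. This is an anchored walk (since $w_0\in\G_0$) of length $i<n$, and the edge $q_i\to p_0$ is present because $w$ is itself a walk. Then I would invoke Proposition \ref{pathProducts} with the choices $q'=q$ and $p'=p$: the hypothesis is automatically satisfied, and the conclusion gives $\e_p\star\e_q = \e_u$, where $u$ is the unique anchored walk equivalent to $q_0\cdots q_i p_0\cdots p_m = w$. Since $w$ is itself anchored, uniqueness (Lemma \ref{pathEquivalence}(5)) forces $u=w$, and $\e_w = \e_p\star\e_q$ is manifestly decomposable.

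For the converse, I would write $\e_w = \sum_k c_k\,\e_{p_k}\star \e_{q_k}$ with each $p_k$ and $q_k$ an anchored walk of length strictly less than $n$, say of lengths $s_k$ and $n_k$ respectively. Proposition \ref{pathProducts} asserts that each nonzero summand is a single basis element $\e_{u_k}$, where $u_k$ is the unique anchored walk equivalent to $q_{k,0}\cdots q_{k,n_k} p'_{k,0}\cdots p'_{k,s_k}$ for some $p'_k\sim p_k$. Linear independence of the basis (Proposition \ref{pathBasis}) then forces $u_k = w$ for at least one such $k$, and since this concatenation is itself anchored, uniqueness of anchored representatives again implies that $w$ equals this concatenation literally. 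Setting $i = n_k$ and using the admissible walk $p'_k$ yields the claimed splitting.

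The only bookkeeping to watch is that cohomology degrees add correctly: $(s_k+1)+(n_k+1) = n+1$, so $s_k+n_k+1 = n$, which is exactly the length of $w$, and both $s_k, n_k$ are strictly less than $n$. Beyond this indexing, the argument is an immediate application of the already-proved propositions; I do not anticipate a substantive obstacle.
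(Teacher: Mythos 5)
Your proof is correct and takes exactly the route the paper intends: the paper offers no separate argument for Corollary \ref{decompClasses}, presenting it as an immediate consequence of Proposition \ref{pathProducts}, and your write-up simply makes the implicit details explicit (reduction to products of two basis elements via the grading, Lemma \ref{pathEquivalence}(5) to identify anchored representatives, and linear independence from Proposition \ref{pathBasis}). Your degree bookkeeping $(s_k+1)+(n_k+1)=n+1$ is also right, so there is nothing to correct.
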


We note this implies the relations of $E(A)$ consist exclusively of monomials and binomials. (That $E(A)$ can be presented this way was also observed by C.\ Phan.)

We are nearly ready to give a combinatorial characterization of finite generation. We call an admissible walk $w$ \emph{decomposable} (resp. \emph{indecomposable}) if $\e_w$ is decomposable (resp. indecomposable) in $E(A)$. 

The following important fact is an application of the classical K\"{o}nig's Lemma (see \cite{HRM} p. 1046); its proof by induction is omitted.

\begin{lemma}
\label{Konig}
If $\Gamma(A)$ contains infinitely many indecomposable finite anchored walks, there exists an infinite anchored walk with infinitely many indecomposable finite prefixes.
\end{lemma}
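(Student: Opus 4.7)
The plan is to apply K\"onig's Lemma to a subtree of the tree of finite anchored walks in $\Gamma(A)$, and then upgrade its conclusion using Proposition \ref{pathExtensions}. First, let $T$ be the tree of finite anchored walks in $\Gamma(A)$, with a virtual root whose children are the length-zero walks and edges given by one-step extension; $T$ is finitely branching since $\Gamma(A)$ is finite. Let $T^*\subseteq T$ denote the set of walks with infinitely many indecomposable extensions (that is, infinitely many indecomposable walks having them as a prefix). The hypothesis places the root in $T^*$, and a routine pigeonhole on the finitely many one-step extensions at $w$ shows each $w\in T^*$ has a child in $T^*$. Hence $T^*$ is infinite and finitely branching, so K\"onig's Lemma produces an infinite path, giving an infinite anchored walk $p=p_0p_1p_2\cdots$ every finite prefix $p_0\cdots p_n$ of which lies in $T^*$.

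The remaining task is to show $p$ has infinitely many indecomposable prefixes. I would prove the stronger claim that for each $n$, at least one of $p_0\cdots p_n$ and $p_0\cdots p_{n+1}$ is indecomposable. Suppose for contradiction both are decomposable for some $n$. By Corollary \ref{decompClasses}, $p_0\cdots p_n$ has a cut $i<n$, meaning $p_{i+1}\cdots p_n$ is admissible. I first observe that $n-i$ must be odd: if $n-i$ were even, Proposition \ref{pathExtensions} would keep $p_{i+1}\cdots p_n u_1\cdots u_k$ admissible for every continuation, forcing every extension of $p_0\cdots p_n$ to be decomposable at $i$, which contradicts $p_0\cdots p_n\in T^*$. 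The same parity restriction applies to every cut of $p_0\cdots p_{n+1}$.

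Now fix any indecomposable extension $w'$ of $p_0\cdots p_{n+1}$; it is also an indecomposable extension of $p_0\cdots p_n$. Applying Proposition \ref{pathExtensions} through a cut of $p_0\cdots p_n$ (of odd length) forces the extension length $|w'|-n$ to be odd, while the same argument applied through a cut of $p_0\cdots p_{n+1}$ forces $|w'|-(n+1)$ odd, i.e.\ $|w'|-n$ even, a contradiction. Hence $p_0\cdots p_{n+1}$ has no indecomposable extension, contradicting $p_0\cdots p_{n+1}\in T^*$, and the stronger claim follows. The main obstacle is exactly this last step: K\"onig's Lemma alone gives only a walk whose prefixes have indecomposable descendants, not a walk whose prefixes are themselves indecomposable. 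The parity rigidity of Proposition \ref{pathExtensions} — extending an even-length admissible walk by any walk preserves admissibility — is precisely what excludes two consecutive decomposable prefixes each having infinitely many indecomposable extensions, delivering the desired strengthening.
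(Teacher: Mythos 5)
Your proof is sound in substance and, unlike the paper, actually supplies one: the paper's entire justification of Lemma \ref{Konig} is that it ``is an application of the classical K\"onig's Lemma'' whose ``proof by induction is omitted.'' Your two-stage plan is the right one, and the second stage is genuinely needed: K\"onig's Lemma applied to the prefix-closed, finitely branching tree $T^*$ only yields a branch whose prefixes have indecomposable descendants, and such prefixes may themselves be decomposable, since Proposition \ref{pathExtensions} propagates admissibility to all extensions only when the admissible suffix has even length. Your key claim --- that two consecutive prefixes of the branch cannot both be decomposable while both lie in $T^*$ --- is correct, and the final contradiction (the length $|w'|-n$ forced to be simultaneously odd and even) is correctly derived.

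There is, however, a parity slip in the middle that you should fix. With your definition of a cut ($p_{i+1}\cdots p_n$ admissible), that suffix has length $n-i-1$ in the paper's convention, so the case of Proposition \ref{pathExtensions} that makes every continuation admissible is ``$n-i-1$ even,'' i.e.\ $n-i$ \emph{odd}. Hence the correct observation is that the admissible suffix at any cut must have odd length, equivalently $n-i$ must be \emph{even} --- the opposite of what you wrote. The error does not propagate: your concluding step invokes ``a cut of odd length,'' which is the corrected statement, so only the intermediate sentence needs repair. Incidentally, the same parity rigidity yields a shorter route that is likely the induction the authors had in mind: by Proposition \ref{pathExtensions}, a prefix of an indecomposable walk whose length has the same parity as the whole walk is again indecomposable; hence one of the two parity classes contains infinitely many indecomposable anchored walks, that class forms an infinite, finitely branching tree under extension by two edges, and K\"onig's Lemma applied directly to it produces an infinite anchored walk with infinitely many indecomposable prefixes, with no upgrade step required.
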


Our main theorem characterizes infinite walks with infinitely many indecomposable prefixes. In the next section, we give a finite procedure for checking these conditions. If $p$ is a walk of length $n$ in $\Gamma(A)$, let $\widetilde{p}=p_1\cdots p_n$ be the walk $p$ with the initial edge deleted. Recall from Section \ref{CPSgraph} that if $e$ is an admissible edge in an infinite walk $p$, we call $e$ \emph{dense} in $p$ if $e$ has an admissible even-length extension in $p$.

\begin{thm}
\label{mainTheorem}
Let $A$ be a monomial $k$-algebra. The following are equivalent.
\begin{enumerate}
\item $E(A)$ is finitely generated.
\item Every infinite anchored walk in $\Gamma(A)$ has finitely many indecomposable prefixes.
\item For every infinite anchored walk $p$ in $\Gamma(A)$, $\widetilde{p}$ contains a dense edge or two admissible edges of opposite parity.
\end{enumerate} 
\end{thm}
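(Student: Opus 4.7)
The plan is to prove the cycle of implications $(1)\Leftrightarrow(2)$ and $(2)\Leftrightarrow(3)$. The first equivalence is nearly immediate from Proposition \ref{pathBasis}: $E(A)$ is finitely generated as a $k$-algebra if and only if it contains only finitely many indecomposable basis elements $\e_w$, which in turn correspond to indecomposable finite anchored walks in $\Gamma(A)$. The forward direction is then trivial, and the reverse direction is the contrapositive of K\"{o}nig's Lemma (Lemma \ref{Konig}). The substance of the theorem lies in $(2)\Leftrightarrow(3)$, for which the main technical tool is Corollary \ref{decompClasses}: an anchored walk $w=w_0\cdots w_n$ is decomposable exactly when some suffix $w_j\cdots w_n$ with $1\le j \le n$ is admissible (including the length-$0$ case $j=n$, which requires $w_n\in\G_0$).

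For $(3)\Rightarrow(2)$ I would show that each disjunct in $(3)$ forces all but finitely many prefixes of $p$ to be decomposable. If an edge $p_ip_{i+1}$ in $\widetilde p$ is dense with admissible even-length extension $p_i\cdots p_{i+s}$ in $p$, then Proposition \ref{pathExtensions} applied to this admissible even-length walk yields that every further extension of it inside $p$ is admissible. Hence for $n\ge i+s$ the suffix $p_i\cdots p_n$ is admissible, and $p^{(n)}=p_0\cdots p_n$ is decomposable. If instead $\widetilde p$ contains admissible edges at positions $i<j$ of opposite parity, Proposition \ref{pathExtensions} gives that $p_i\cdots p_n$ is admissible whenever $n-i$ is odd and $p_j\cdots p_n$ is admissible whenever $n-j$ is odd; since $i\not\equiv j\pmod 2$, one of these conditions holds for every $n>j$, so every such prefix is decomposable.

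For $(2)\Rightarrow(3)$ I would argue the contrapositive: assume $\widetilde p$ contains no dense edge and that all admissible edges of $\widetilde p$ share a common parity $\pi$, and exhibit infinitely many indecomposable prefixes of $p$. The key refinement is the sharp characterization that for $j\ge 1$ and $n>j$, the suffix $p_j\cdots p_n$ is admissible if and only if the edge $p_jp_{j+1}$ is admissible and $n-j$ is odd. Sufficiency is Proposition \ref{pathExtensions}; necessity requires two observations: first, an admissible suffix of length at least $1$ must begin with an admissible edge (take the length-$1$ prefix and apply Lemma \ref{pathEquivalence}(1)); second, an admissible even-length suffix at $j\ge 1$ would witness density of $p_jp_{j+1}$, contradicting the assumption. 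I would also rule out admissible length-$0$ suffixes $p_n\in\G_0$ for $n\ge 1$: by Remark \ref{G0edge}, any such interior vertex would make $p_np_{n+1}$ an admissible anchored edge whose extension $p_np_{n+1}p_{n+2}$ is again anchored and hence admissible of even length, forcing $p_np_{n+1}$ to be dense in $\widetilde p$. Combining these observations, every $n\equiv\pi\pmod 2$ exceeding the first admissible-edge index in $\widetilde p$ (or every $n\ge 1$ if $\widetilde p$ has no admissible edges) yields an indecomposable prefix $p^{(n)}$, giving infinitely many such prefixes.

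The main obstacle I anticipate is the parity bookkeeping together with the length-$0$ suffix case: without excluding interior $\G_0$-vertices from $\widetilde p$, the construction of indecomposable prefixes breaks down, and recognizing that the no-dense-edge hypothesis is precisely what rules this out is the subtle step in the argument.
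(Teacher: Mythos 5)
Your proof is correct and follows essentially the same route as the paper: K\"{o}nig's Lemma (Lemma \ref{Konig}) for $(1)\Leftrightarrow(2)$, and Proposition \ref{pathExtensions} together with Corollary \ref{decompClasses} for $(2)\Leftrightarrow(3)$, including the same parity analysis and the same construction of infinitely many indecomposable prefixes in the contrapositive direction. If anything, your write-up is slightly more careful than the paper's, which leaves implicit both that an admissible suffix of positive length must begin with an admissible edge (your appeal to Lemma \ref{pathEquivalence}(1)) and that interior $\G_0$-vertices, i.e.\ length-$0$ admissible suffixes, are ruled out by the no-dense-edge hypothesis.
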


Here ``opposite parity'' means the number of edges properly between the two admissible edges is even. See Section \ref{examples} for an illustration of the theorem.

\begin{proof} The equivalence of (1) and (2) follows from Lemma \ref{Konig}. We prove (2) and (3) are equivalent.

Let $p$ be an infinite anchored walk in $\Gamma(A)$. If $\widetilde{p}$ contains a dense edge $e$, then there exists an even-length extension $e\vdash q$ in $p$ such that $q$ is admissible.  
By Proposition \ref{pathExtensions}, every extension of $q$ is admissible, so by Corollary \ref{decompClasses}, $p$ has only finitely many indecomposable prefixes. 

By Proposition \ref{pathExtensions}, every odd-length extension of an admissible edge is admissible. Hence if $\widetilde{p}$ has admissible edges of opposite parity, $p$ has only finitely many indecomposable prefixes. 

Suppose instead that $\widetilde{p}$ has no dense edges and all admissible edges in $\widetilde{p}$ have the same parity. If $\widetilde{p}$ contains no admissible edges, then Corollary \ref{decompClasses} implies that every finite prefix of $p$ is indecomposable. If $\widetilde{p}$ contains an admissible edge, let $e=p_ip_{i+1}$ be the admissible edge with $i$ minimal. Since admissible edges have the same parity, for $n>0$ the admissible edges in $p_0\cdots p_{i+2n}$ have the form $p_{i+2j}p_{i+2j+1}$ for $0\le j<n$. Since a walk of the form $p_{i+2j}\cdots p_{i+2n}$ has even length and $\widetilde{p}$ contains no dense edges, $p_0\cdots p_{i+2n}$ is indecomposable for all $n>0$ by Corollary \ref{decompClasses}.

\end{proof}

\begin{rmk}
\label{firstEdgeDense}
If $w$ is an infinite walk and for $j>0$, $w_jw_{j+1}$ is a dense edge in $\widetilde{w}$, then any admissible edge $w_{j-2i}w_{j-2i+1}$, $0\le i\le \lfloor\frac{j}{2}\rfloor$ is also dense in $w$. This follows from the fact that $w_{j-2i}\cdots w_{j-1}$ is an odd-length extension of $w_{j-2i}w_{j-2i+1}$, Propositions \ref{pathExtensions}, \ref{pathBasis} and \ref{pathProducts}. Thus $\widetilde{w}$ contains a dense edge if and only if the first admissible edge in $\widetilde{w}$ is dense in $w$. It follows from the discussion in Section \ref{CPSgraph} that for the algebras $A$ and $B$ from Example \ref{ex1}, $E(A)$ is finitely generated and $E(B)$ is not.
\end{rmk}

\section{An Upper Bound for Checking Finite Generation}
\label{algorithm}

At first glance, verification of the conditions of Theorem \ref{mainTheorem} appears to require an infinite procedure, in general. The infinitude arises both from the number of infinite walks in $\Gamma(A)$ and the determination of edge density. In this section we establish an upper bound on the cohomological degree of an indecomposable element if $E(A)$ is finitely generated. 

For the first time, the distinction between ``path'' and ``walk'' is important. Let $L$ be the maximal length of an anchored path $p$ in $\Gamma(A)$ with $p_{L-1}p_L$ an admissible edge, and no edge $p_ip_{i+1}$ admissible for $0<i<L-1$.  Let $M$ be the size of the largest edge equivalence class, and let $\mathcal E$ be the number of edges of $\Gamma(A)$.

First we show that if $\text{gl.dim } A=\infty$ and $L=1$, $E(A)$ is not finitely generated. Thus in the sequel, we will focus our attention on the case $L>1$. In that case, the existence of an admissible edge $m_1\rightarrow m_2$ with $m_1\notin\G_0$ implies $M>1$.

\begin{lemma}
\label{L1}
Let $A$ be a monomial $k$-algebra with $\text{gl.dim } A=\infty$ and $L=1$. Then $E(A)$ is finitely generated if and only if every circuit in $\Gamma(A)$ contains a vertex in $\G_0$.
\end{lemma}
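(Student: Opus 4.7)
The plan is to apply Theorem \ref{mainTheorem} after first observing the structural consequence of $L = 1$ that every admissible edge of $\Gamma(A)$ has source in $\G_0$. Suppose for contradiction that $L = 1$ but some admissible edge $m \to m'$ has $m \notin \G_0$, and choose such an edge with $d(m)$ minimal, where $d(m)$ denotes the length of a shortest anchored path in $\Gamma(A)$ ending at $m$. If $d(m) = 1$, extend the corresponding anchored path $p_0 m$ to $p_0 m m'$: this is anchored of length $2$ with admissible last edge and vacuous intermediate condition, so $L \ge 2$. If $d(m) \ge 2$, fix a shortest anchored path $p_0 p_1 \cdots p_{d(m)} = m$; by minimality of the path, $p_j \notin \G_0$ for $1 \le j \le d(m)$ (otherwise $p_j \cdots p_{d(m)}$ would be a strictly shorter anchored path), so $1 \le d(p_j) \le j < d(m)$ for $1 \le j \le d(m) - 1$, and the minimality of $d(m)$ forbids any intermediate edge $p_j p_{j+1}$ from being admissible. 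Extending by $m'$ then yields an anchored path of length $d(m) + 1 \ge 3$ whose last edge is admissible and whose intermediate edges are all non-admissible, forcing $L \ge d(m) + 1 \ge 2$. In either case $L \ge 2$ contradicts $L = 1$.

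With this observation in hand the $(\Leftarrow)$ direction follows easily. Given an infinite anchored walk $p$, finiteness of $\Gamma(A)$ together with the hypothesis that every circuit of $\Gamma(A)$ meets $\G_0$ implies that $p$ visits $\G_0$ at infinitely many positions, so some $i \ge 1$ satisfies $p_i \in \G_0$. Then $p_i p_{i+1}$ is an anchored (hence admissible) edge of $\widetilde{p}$, and the tail $p_i p_{i+1} p_{i+2} \cdots$ is itself an anchored walk, so its length-$2$ prefix is admissible. Therefore $p_i p_{i+1}$ is dense in $p$, and Theorem \ref{mainTheorem} gives $E(A)$ finitely generated.

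For the $(\Rightarrow)$ direction I argue contrapositively. Suppose some circuit $C$ contains no $\G_0$ vertex, pick $v \in C$, and take an anchored path $p_0 p_1 \cdots p_\ell = v$ of minimum possible length. Any $p_j \in \G_0$ with $j \ge 1$ would yield a strictly shorter anchored path $p_j \cdots p_\ell$, contradicting minimality, so $p_j \notin \G_0$ for all $j \ge 1$. Extend $p$ to an infinite anchored walk by looping indefinitely around $C$; since $C$ avoids $\G_0$, the property $p_j \notin \G_0$ for $j \ge 1$ persists. By the structural observation, no edge of $\widetilde{p}$ is admissible, so $\widetilde{p}$ contains neither a dense edge nor a pair of admissible edges of opposite parity, and Theorem \ref{mainTheorem} shows $E(A)$ is not finitely generated. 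The main obstacle is establishing the structural observation: although $L = 1$ directly excludes only certain short anchored paths with admissible last edge, propagating this restriction to admissible edges whose sources are arbitrarily deep in $\G$ requires the minimality argument given above.
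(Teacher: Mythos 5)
Your overall strategy is the same as the paper's: both directions are routed through Theorem \ref{mainTheorem}, the ``if'' direction exactly as in the paper (a $\G_0$-vertex on every circuit forces a dense anchored edge in the tail of every infinite anchored walk --- this half of your argument is correct), and the ``only if'' direction by splicing a minimal anchored path onto a circuit avoiding $\G_0$ and looping forever. The difference is that where the paper simply asserts ``Since $L=1$, neither $\widetilde{p}$ nor $C$ contains an admissible edge,'' you try to justify this via your structural observation that $L=1$ forces every admissible edge to have source in $\G_0$. You picked the right point to worry about, but your proof of the observation has a genuine gap: in both the $d(m)=1$ and $d(m)\ge 2$ cases you extend an anchored path ending at $m$ by the edge $m\to m'$ and declare the result an anchored \emph{path}. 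That requires $m'$ to be distinct from every vertex already on the path, and nothing rules this out: $m'$ can close a cycle back into the path, and in the extreme case $m'=m$ the edge is a loop (which the paper explicitly allows). Since $L$ is defined using paths --- and Section \ref{algorithm} stresses that the path/walk distinction now matters --- the inequality $L\ge d(m)+1$ does not follow.

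The gap cannot be repaired, because the structural observation is false. Take $A=k\la a,b\ra/\la abab\ra$. Then $\Gamma(A)$ is $b\to aba\to ab$ together with a loop at $ab$ (and the isolated vertex $a$). The loop is admissible, since $ab\tensor ab=abab=aba\tensor b$, i.e.\ $(ab,ab)\sim(b,aba)$; its source $ab$ is not in $\G_0$. Nevertheless $L=1$: a path can never traverse a loop, so the only anchored paths are $(a)$, $(b)$, $(b,aba)$, $(b,aba,ab)$, and the last edge $aba\to ab$ is not admissible because $ab\tensor aba=ababa$ is not a minimal generator. This same example defeats your ``only if'' argument (and, in fact, the Lemma as stated together with the paper's own unproved assertion about $C$): here $\text{gl.dim } A=\infty$, $L=1$, and the loop is a circuit missing $\G_0$, yet the unique infinite anchored walk $w=(b,aba,ab,ab,\dots)$ has admissible loop edges in $\widetilde{w}$, and the edge $w_2w_3$ is even dense since $(ab,ab,ab)\sim(b,aba,ab)$; so Theorem \ref{mainTheorem} (or Corollary \ref{decompClasses} directly) shows $E(A)$ \emph{is} finitely generated, indeed generated in cohomological degree $\le 3$. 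So your write-up faithfully reconstructs the argument the paper intended, but the step you correctly flagged as ``the main obstacle'' is not merely hard --- it fails, precisely for circuits (such as loops) that a path can enter but not traverse. Any correct treatment must handle these; for instance, the structural observation and the Lemma both become provable by your truncation argument if $L$ is defined using anchored \emph{walks} rather than paths, since an anchored walk can always be extended by the edge $m\to m'$.
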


\begin{proof}
Every anchored walk is admissible. If every circuit in $\Gamma(A)$ contains a vertex in $\G_0$, then for any infinite walk $w$, the walk $\widetilde{w}$ contains a vertex in $\G_0$. Thus $\widetilde{w}$ contains a dense edge, and because $w$ was arbitrary, $E(A)$ is finitely generated by Theorem \ref{mainTheorem}.

 Since $\text{gl.dim } A=\infty$, the graph $\Gamma(A)$ contains a circuit. If $\Gamma(A)$ contains a circuit $C$ missing $\G_0$, let $p$ be an anchored path of length $n$ such that $p_n$ is in $C$. Since $L=1$, neither $\widetilde{p}$ nor $C$ contains an admissible edge. Let $q$ be the infinite extension of $p$ defined by repeatedly traversing $C$. Then $\widetilde{q}$ contains no admissible edge, hence every prefix of $q$ is indecomposable by Corollary \ref{decompClasses} and $E(A)$ is not finitely generated by Theorem \ref{mainTheorem}.
 
 \end{proof}

Next we establish an important upper bound.

\begin{lemma}
\label{bound}
Suppose $q$ is a walk of length $N>2\mathcal E (M-1)+L$ in $\Gamma(A)$. Assume $L>1$ and $\widetilde{q}$ contains admissible edge $q_{j}q_{j+1}$ for $0<j< L$. Let $p=q_{j}q_{j+1}\cdots q_{N}$ if $N-j$ is even and $p=q_{j}q_{j+1}\cdots q_{N-1}$ if $N-j$ is odd. Then
\begin{enumerate}
\item $p$ is admissible
\item for all $0\le i\le 2\mathcal E (M-1)$, we have $q_{2i+j}q_{2i+j+1}\sim p'_{2i}p'_{2i+1}$ where $p'\sim p$ and $p'$ is anchored.
\item Either 
\begin{enumerate}
\item $q_{2i+j}q_{2i+j+1}= p'_{2i}p'_{2i+1}$ for some $0\le i\le 2\mathcal E (M-1)$
or 
\item there exist $0\le c < d\le 2\mathcal E (M-1)$ such that 
$$q_{2c+j}q_{2c+j+1}=q_{2d+j}q_{2d+j+1}\quad \text{ and }\quad  p'_{2c}p'_{2c+1}=p'_{2d}p'_{2d+1}$$
\end{enumerate}
\end{enumerate}
\end{lemma}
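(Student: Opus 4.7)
The argument divides naturally into three parts, with (1) the main technical work and (2), (3) following quickly.

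For (1), since $q_jq_{j+1}$ is an admissible walk of length $1$, Proposition \ref{pathExtensions} gives admissibility of every odd-length extension of $q_jq_{j+1}$. When $N-j$ is odd, $q_j\cdots q_N$ is itself an odd-length admissible extension; letting $r_0r_1\cdots r_{N-j}$ be its anchored equivalent, the plan is to show the truncation $r_0\cdots r_{N-1-j}$ is anchored and equivalent to $p=q_j\cdots q_{N-1}$, by using Lemma \ref{pathEquivalence}(3),(4) to extract a tensor cancellation in the equation $r_{N-j}\otimes r_{N-1-j}\otimes\cdots\otimes r_0=q_N\otimes q_{N-1}\otimes\cdots\otimes q_j$. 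When $N-j$ is even, no odd-length extension of $q_jq_{j+1}$ fits inside $q$. Instead I would start from the admissible walk $q_j\cdots q_{N-1}$ of odd length $N-1-j$, take its anchored equivalent $r_0\cdots r_{N-1-j}$, and show $r_{N-1-j}\to q_N$ is an edge of $\Gamma(A)$; then $r_0\cdots r_{N-1-j}q_N$ is anchored and equivalent to $p$. Lemma \ref{pathEquivalence}(3) and (4) give $r_{N-1-j}=q_{N-1}\otimes m$ for some monomial $m$, hence $q_N\otimes r_{N-1-j}\in I$. The minimality condition $L(q_N,r_{N-1-j})=q_N$ reduces, via a case analysis on where a minimal generator of $I$ can sit as a substring of $q_N\otimes q_{N-1}\otimes m$, to the facts that $q_N\in\A_{q_{N-1}}$ and $r_{N-1-j}\notin I$.

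For (2), let $p'$ be the unique anchored walk equivalent to $p$ (guaranteed by (1) and Lemma \ref{pathEquivalence}(5)). The claim $q_{2i+j}q_{2i+j+1}\sim p'_{2i}p'_{2i+1}$ is the equality $p'_{2i+1}\otimes p'_{2i}=q_{2i+j+1}\otimes q_{2i+j}$. Since $p_{2i}=q_{2i+j}$ and $p_{2i+1}=q_{2i+j+1}$ by construction of $p$, this is exactly the content of Lemma \ref{pathEquivalence}(3) applied to $p\sim p'$ at index $k=i$, which is valid for $i$ in the asserted range by the length hypothesis $N>2\mathcal E(M-1)+L$.

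For (3), define $\phi(i)=\bigl(q_{2i+j}q_{2i+j+1},\,p'_{2i}p'_{2i+1}\bigr)$ for $0\le i\le 2\mathcal E(M-1)$. By (2), both components of $\phi(i)$ are edges in a common equivalence class. If (a) fails for every $i$, then each $\phi(i)$ is an ordered pair of two \emph{distinct} equivalent edges. The number of such off-diagonal pairs is $\sum_C |C|(|C|-1)\le(M-1)\mathcal E$, where $C$ ranges over edge equivalence classes. Since the index set has cardinality $2\mathcal E(M-1)+1>(M-1)\mathcal E$, pigeonhole produces $0\le c<d\le 2\mathcal E(M-1)$ with $\phi(c)=\phi(d)$, yielding (b).

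The hard part will be part (1), specifically verifying the minimality $L(q_N,r_{N-1-j})=q_N$ in the $N-j$ even case. The substring analysis must rule out minimal generators of $I$ straddling the $q_{N-1}$ block in $q_N\otimes q_{N-1}\otimes m$; here the facts that $q_{N-1}\notin I$, $r_{N-1-j}=q_{N-1}\otimes m\notin I$, and $q_N=L(q_N,q_{N-1})$ are all essential.
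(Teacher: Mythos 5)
Your parts (2) and (3) are essentially sound, and (3) is the same pigeonhole argument the paper gives (the set of ordered pairs of distinct equivalent edges has size at most $\mathcal E(M-1)$, so among the pairs $(q_{2i+j}q_{2i+j+1},\,p'_{2i}p'_{2i+1})$ either some pair is diagonal or some pair repeats). But part (1), which you rightly call the crux, has an unfixable gap, and the root cause is that you took the printed parity cases literally. As printed, \emph{both} cases make $p$ a walk of \emph{even} length ($N-j$, resp.\ $N-1-j$), and ``even-length extensions of an admissible edge are admissible'' is false in general --- the entire notion of a dense edge exists precisely because Proposition \ref{pathExtensions} guarantees this only for odd-length extensions. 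The paper's own proof reads ``(1) is immediate from Proposition \ref{pathExtensions} since the length of $p$ is odd,'' and in the application (Theorem \ref{finiteCheck}) indecomposability of $q$ forces $N-j$ to be even and $p$ is taken to be $q_j\cdots q_{N-1}$; so the intended statement has the two cases swapped ($p=q_j\cdots q_N$ when $N-j$ is \emph{odd}, $p=q_j\cdots q_{N-1}$ when $N-j$ is \emph{even}), in which case $p$ has odd length and (1) is a one-line consequence of Proposition \ref{pathExtensions}.

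Concretely, both of your arguments for (1) fail on the algebra $B$ of Example \ref{ex1}. In your ``$N-j$ even'' case, the minimality claim $L(q_N,r_{N-1-j})=q_N$ does not follow from $q_N\in\A_{q_{N-1}}$ and $r_{N-1-j}\notin I$, because a minimal generator of $I$ can straddle a proper suffix of $q_N$, all of $q_{N-1}$, and part of $m$: in $\Gamma(B)$ the anchored equivalent of $ab\rightarrow cd$ is $b\rightarrow cda$, and $b\otimes cda=bcda\in I$, so $L(ab,cda)=b\neq ab$; accordingly $cda\rightarrow ab$ is not an edge and $ab\rightarrow cd\rightarrow ab$ is not admissible (the paper notes no even-length extension of $ab\rightarrow cd$ in this walk is admissible). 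In your ``$N-j$ odd'' case, the truncation argument fails for the same reason: equivalence passes only to odd-length prefixes (Lemma \ref{pathEquivalence}(1)), and indeed $ab\rightarrow cd\rightarrow ab\rightarrow cd$ is admissible in $\Gamma(B)$ while its even-length prefix $ab\rightarrow cd\rightarrow ab$ is not, so the ``tensor cancellation'' you hope for does not exist (one only gets $r_{N-j}\otimes r_{N-j-1}=q_N\otimes q_{N-1}$, not $r_{N-j}=q_N$). A smaller point: the ranges $0\le i\le 2\mathcal E(M-1)$ in (2) and (3) do not fit inside $q$ under the hypothesis $N>2\mathcal E(M-1)+L$ (that guarantees only about $2\mathcal E(M-1)+1$ edges in $p$); the paper's proof of (3) in fact runs the pigeonhole over $0\le i\le\mathcal E(M-1)$, which suffices, so your unverified claim that the stated range ``is valid by the length hypothesis'' is also a gap, though a repairable one.
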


\begin{proof}
Statement (1) is immediate from Proposition \ref{pathExtensions} since the length of $p$ is odd. Statement (2) then follows from Lemma \ref{pathEquivalence}.
To prove (3), let $\mathbb E$ denote the set of edges of $\Gamma(A)$ and let $$S=\{(e_1, e_2)\in \mathbb E\times \mathbb E : e_1\sim e_2, e_1\neq e_2\}$$ Then $|S|\le \mathcal E (M-1)$. Since the walks $p$ and $p'$ consist of at least $2\mathcal E (M-1)$ edges of $\Gamma(A)$, either one of the pairs  of equivalent edges
$$(q_{2i+j}q_{2i+j+1}, p'_{2i}p'_{2i+1})\qquad 0\le i\le \mathcal E (M-1)$$ is not in $S$, in which case (a) holds, or some element of $S$ appears twice, in which case (b) holds.

\end{proof}

\begin{thm}
\label{finiteCheck}
Let $A$ be a monomial $k$-algebra with $\text{gl.dim } A=\infty$ and $L>1$. Let $N$ be the smallest even integer greater than or equal to $2\mathcal E (M-1)+L+1$. The Yoneda algebra $E(A)$ is finitely generated if and only if every anchored walk $q$ of length $N$ or $N+1$ is decomposable. 
\end{thm}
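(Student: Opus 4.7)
The plan is to prove each implication separately. The ``if'' direction follows from Proposition \ref{pathExtensions}; the ``only if'' direction uses Lemma \ref{bound} together with a parity analysis.

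For the ``if'' direction, assume every anchored walk of length $N$ or $N+1$ is decomposable. Given any $w$ of length $\ell > N+1$, choose $k \in \{N, N+1\}$ so that $\ell - k$ is even. By hypothesis, the length-$k$ prefix $w_0 \cdots w_k$ decomposes, via Corollary \ref{decompClasses}, as $w_0 \cdots w_i \cdot p$ with $p = w_{i+1}\cdots w_k$ admissible. Since $\ell - k$ is even, Proposition \ref{pathExtensions} ensures the extension $w_{i+1}\cdots w_\ell$ remains admissible, so $w$ is decomposable. Thus every indecomposable anchored walk has length less than $N$; since $\Gamma(A)$ is finite, $E(A)$ has only finitely many indecomposable classes and is therefore finitely generated.

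For the ``only if'' direction, I argue the contrapositive: given an indecomposable anchored walk $q$ of length $N$ (the length-$(N+1)$ case is symmetric), I would construct an infinite anchored walk $r$ with infinitely many indecomposable prefixes, so that $E(A)$ is not finitely generated by Theorem \ref{mainTheorem}. First, the indecomposability of $q$ combined with Proposition \ref{pathExtensions} forces the parity condition that every admissible edge $q_k q_{k+1}$ in $\widetilde q$ has $k$ even: otherwise the odd-length extension $q_k\cdots q_N$ would be admissible, contradicting indecomposability. In the main case, $\widetilde q$ contains an admissible edge at some position $0 < j < L$, so Lemma \ref{bound} applies. I rule out case 3(a): if $q_{2i+j}q_{2i+j+1} = p'_{2i}p'_{2i+1}$, then the anchored walk $p'_0\cdots p'_{2i+1}q_{2i+j+2}\cdots q_N$ is equivalent to the suffix $q_j\cdots q_N$ (by Lemma \ref{pathEquivalence}(1) applied to the odd-length prefix of $p'$, together with the vertex matches from case 3(a)), so that suffix is admissible, contradicting indecomposability. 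Hence case 3(b) holds, yielding $q_{2c+j}q_{2c+j+1} = q_{2d+j}q_{2d+j+1}$ and $p'_{2c}p'_{2c+1} = p'_{2d}p'_{2d+1}$ for some $0 \le c < d$.

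Case 3(b) gives a loop of even length $2(d-c)$ in $q$, starting and ending at the repeated vertex $q_{2c+j} = q_{2d+j}$; I iterate this loop to construct the infinite walk $r$, with the parallel loop in $p'$ serving as an anchored witness that the parity structure is preserved at each iterate. All admissible edges of $\widetilde r$ remain at even positions (no admissible edges of opposite parity), and an argument analogous to the case 3(a) refutation shows no even-length extension of an admissible edge in $\widetilde r$ can be admissible, so $\widetilde r$ contains no dense edge; Theorem \ref{mainTheorem} then forces $E(A)$ to not be finitely generated. The edge case in which $\widetilde q$ has no admissible edge at position less than $L$ is handled by extracting a vertex repetition from a long admissible-edge-free initial segment of $q$, producing a loop whose iteration yields an infinite walk whose tilde is admissible-edge-free (hence automatically has no dense edge). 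The main technical obstacle is the rigorous propagation of non-admissibility through the periodic iteration in case 3(b)—specifically, verifying that no even-length extension of an admissible edge in $\widetilde r$ becomes admissible due to the new vertices introduced by iteration—which requires carefully combining the equivalence of $q$ and $p'$ supplied by Lemma \ref{bound}(2) with the same case 3(a)--style anchored-walk construction applied within each iterate.
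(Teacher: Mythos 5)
Your ``if'' direction is correct and complete, and it is in fact more direct than the paper's: rather than verifying condition (3) of Theorem \ref{mainTheorem} for every infinite anchored walk, you show outright that every anchored walk of length $>N+1$ is decomposable (pick the prefix of length $N$ or $N+1$ of matching parity and push its admissible suffix forward with Proposition \ref{pathExtensions}), so all indecomposables live in bounded cohomological degree. Your ``only if'' direction also follows the paper's strategy: the parity analysis, the refutation of case 3(a) of Lemma \ref{bound} by exhibiting an anchored walk equivalent to $q_j\cdots q_N$, and the periodic walk built from case 3(b) all match the paper (which packages your 3(a) argument as an appeal to Lemma \ref{density}). But your proof stops exactly where the real work begins: you defer, as ``the main technical obstacle,'' the verification that no admissible edge of $\widetilde{r}$ is dense in $r$, and that step is the crux, not a routine check. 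The paper closes it as follows: the anchored equivalents of the odd-length extensions of $q_jq_{j+1}$ inside $r$ are precisely the prefixes of the parallel periodic walk $w'=p'_0\cdots p'_{2d-1}z'z'\cdots$ with $z'=p'_{2c}\cdots p'_{2d-1}$, because the pair equalities $q_{j+2k+1}\tensor q_{j+2k}=p'_{2k+1}\tensor p'_{2k}$ of Lemma \ref{pathEquivalence}(3) recur periodically thanks to 3(b); then by Lemma \ref{density}, density of $q_jq_{j+1}$ in $r$ would force $w'_t=r_{j+t}$ for some even $t$, while periodicity makes every pair $(r_{j+t},w'_t)$ coincide with one of the finitely many pairs $(q_{j+2i},p'_{2i})$, $0\le i< d$, already known to be unequal; finally Remark \ref{firstEdgeDense} excludes density of every other admissible edge of $\widetilde{r}$. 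Without some version of this argument, your construction does not yet satisfy the hypotheses of Theorem \ref{mainTheorem}.

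Your edge case is also genuinely flawed as stated. From ``$\widetilde{q}$ has no admissible edge at position $<L$'' you cannot ``extract a vertex repetition from a long admissible-edge-free initial segment'': $L$ bounds neither the number of vertices nor the number of edges of $\Gamma(A)$, so a segment of length about $L$ need not repeat anything. What maximality of $L$ does give is that the anchored walk up to and including the first admissible edge $q_jq_{j+1}$ (when $j\ge L$) is not a path; but the forced repetition may be $q_{j+1}=q_m$ with $m\le j$, i.e., the repeated vertex is the head of the admissible edge itself. In that event your loop contains an admissible edge, so the iterated walk's tilde is not admissible-edge-free; worse, if the loop has odd length, the iterates carry admissible edges of opposite parity, and by Theorem \ref{mainTheorem} such a walk cannot witness non-finite-generation at all. (In the sub-case where $\widetilde{q}$ has no admissible edge whatsoever, the repetition comes not from $L$ but from $N>\mathcal E$, which uses $M>1$.) This is why the paper splits cases differently -- either $\widetilde{q}$ has no admissible edge or its first admissible edge follows a circuit in $\widetilde{q}$, both handled by the construction of Lemma \ref{L1}, versus a first admissible edge at position $j<L$ -- rather than splitting on the position of the first admissible edge relative to $L$.
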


Since $M-1$ and $L$ are at most $\mathcal E$, we obtain the weaker, but more easily stated bound of $2\mathcal E^2+\mathcal E+1$ mentioned in the Introduction.

\begin{proof}

Suppose every anchored walk of length $N$ or $N+1$ is decomposable. Let $q$ be any anchored walk of length $N+1$. Then $q$ and $q'=q_0\cdots q_{N}$ are both decomposable. By Corollary \ref{decompClasses}, $\widetilde{q'}$ contains an admissible edge $q_iq_{i+1}$. By Proposition \ref{pathExtensions}, every odd length extension of $q_iq_{i+1}$ is admissible. This can account for the decomposability of only one of $q$ and $q'$. Since both are decomposable, either $q$ contains an admissible edge whose parity is opposite $q_iq_{i+1}$ or an even length extension of $q_iq_{i+1}$ is admissible, making $q_iq_{i+1}$ dense in any infinite walk with prefix $q$. Since $q$ was arbitrary, $E(A)$ is finitely generated by Theorem \ref{mainTheorem}.

Conversely, suppose $\Gamma(A)$ contains an indecomposable anchored walk $q$ of length $N$ or $N+1$. We will construct an infinite anchored walk $w$ in $\Gamma(A)$ in which all admissible edges have the same parity, but none are dense in $w$. That $E(A)$ is not finitely generated will then follow from Theorem \ref{mainTheorem}. 

By the discussion preceeding Lemma \ref{L1}, we have $M>1$, hence $\widetilde{q}$ contains a circuit. If $\widetilde{q}$ contains no admissible edge, or if the first admissible edge of $\widetilde{q}$ follows a circuit in $\widetilde{q}$, we can construct an infinite walk in $\Gamma(A)$ in which every prefix is indecomposable as in the proof of Lemma \ref{L1}. Otherwise, let $0<j<L$ be minimal such that $q_jq_{j+1}$ is an admissible edge of $\widetilde{q}$. 

Since $q$ is indecomposable, Proposition \ref{pathExtensions} implies the length of $q$ and the index $j$ must have opposite parity. We consider only the case where $q$ has length $N$, the other case being identical after the obvious necessary index shift.  Let $p=q_j\cdots q_{N-1}$. By Lemma \ref{bound}(1), $p$ is admissible, so let $p'$ be the unique anchored walk equivalent to $p$. 

The walk $q_j\cdots q_N$ is not admissible, so by Lemma \ref{density} we must have $q_{2i+j}\neq p'_{2i}$ for all $0\le i\le 2\mathcal E (M-1)$. Therefore, we have
 $q_{2i+j}q_{2i+j+1}\neq p'_{2i}p'_{2i+1}$ for all $0\le i\le 2\mathcal E (M-1)$.
By Lemma \ref{bound}(3), there exist $0\le c<d\le 2\mathcal E (M-1)$ such that $q_{2c+j}q_{2c+j+1}=q_{2d+j}q_{2d+j+1}$
and $p'_{2c}p'_{2c+1}=p'_{2d}p'_{2d+1}$. 

Let $z=q_{2c+j}\cdots q_{2d+j-1}$, let $z'=p'_{2c}\cdots p'_{2d-1}$ and let $w$ be the infinite walk $$q_0\cdots q_{2c+j-1}zzz\cdots$$
Since $q_{2c+j}=q_{2d+j}$ and $q_{2d+j-1}\rightarrow q_{2d+j}$ is an edge in $\Gamma(A)$, the walk $w$ is indeed well-defined. Likewise, the walk $$w'=p'_0\cdots p'_{2d-1}z'z'z'\cdots$$ is well-defined. Since all admissible edges of $\widetilde{q}$ have the same parity as $q_jq_{j+1}$, the same is true for $\widetilde{w}$. Moreover, every admissible extension of $q_jq_{j+1}$ in $w$ is a prefix of $w'$. Since  $q_{2i+j}\neq p'_{2i}$ for all $0\le i\le d$ as noted above, the edge $q_jq_{j+1}$ is not dense in $w$ by Lemma \ref{density}. By Remark \ref{firstEdgeDense}, $\widetilde{w}$ contains no dense edges. Therefore, $E(A)$ is not finitely generated by Theorem \ref{mainTheorem}.

\end{proof}

In many cases, one can determine if $E(A)$ is finitely generated well before the upper bound above. 
Indeed if $\GKdim\ E(A)=1$, there are finitely many infinite anchored walks. If $d=\GKdim\ E(A)<\infty$, it is easy to describe a recursive procedure: 
\begin{enumerate}
\item Analyze the (finite number of) subgraphs of $\Gamma(A)$ with at most $d-1$ distinct circuits (ignoring multiplicity) in any walk.
\item If no anchored walk with infinitely many indecomposable prefixes is found, let $M$ be the maximal multiplicity of a circuit in an indecomposable walk. Analyze the (finite number of) infinite walks $w$ containing $d$ distinct circuits (ignoring multiplicity) such that the first $d-1$ circuits of $w$ occur with multiplicity $\le M$.
\end{enumerate}

\section{The Noetherian Property}
\label{sectionNoeth}

Green et.\ al.\ \cite{GSSZ} observed that if $A$ is a monomial quadratic algebra, it is possible to determine if $E(A)$ is Noetherian by considering its Ufnarovski relation graph. As noted in Section \ref{CPSgraph}, if $A$ is a monomial quadratic algebra, then $\Gamma(A)$ is precisely the Ufnarovski graph of the Koszul dual $A^!\isom E(A)$ with edge orientations reversed. In this section we prove an analog of Green et.\ al.'s ``Noetherianity'' theorem (Theorem 5.4 of \cite{GSSZ}) holds for $\Gamma(A)$.

The following Lemma illustrates an important difference between quadratic monomial algebras and monomial algebras with defining relations in higher degrees. The Lemma also conveys the sense in which Theorem \ref{Noeth} below generalizes the result in \cite{GSSZ}. 

\begin{lemma}
Let $A$ be a monomial $k$-algebra such that the defining ideal of $A$ is generated by quadratic monomials. Then $\G=\G_0$ and every edge of $\Gamma(A)$ is admissible.
\end{lemma}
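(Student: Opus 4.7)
The plan is to show both claims follow from a single observation about the structure of $\A_m$ when all relations are quadratic: for every $m \in M - I$, we have $\A_m \subseteq \G_0$. Once we have this, both conclusions follow almost immediately.

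First, I would argue that for a quadratic monomial algebra and any $m \in M - I$, an element $w \in \A_m$ must consist of a single generator. Suppose $w \in \A_m$, so $w \in M - I$, $w \tensor m \in I$, and $L(w,m) = w$. Because $w \tensor m$ lies in $I$, some minimal generator of $I$ is a submonomial of $w \tensor m$. Since neither $w$ nor $m$ is in $I$, this relation cannot be contained entirely inside $w$ or inside $m$, so it must straddle the concatenation point. As the relation has tensor degree $2$, it consists of exactly the last letter $a$ of $w$ and the first letter $b$ of $m$. But then $a \tensor m$ already begins with $a \tensor b \in I$, so $a \tensor m \in I$. This exhibits a suffix of $w$ of length $1$ that annihilates $m$; by minimality $L(w,m) = w$ forces $w = a$, i.e.\ $w \in V = \G_0$.

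Having established $\A_m \subseteq \G_0$ for every $m \in M - I$, I would conclude by straightforward induction on $i$ that $\G_i \subseteq \G_0$. Since $\G_0 \subseteq \G$ tautologically, this gives $\G = \G_0$, proving the first half of the lemma. For the second half, observe that every vertex of $\Gamma(A)$ now lies in $\G_0$, so in particular every walk of length $1$ is anchored. Because the equivalence relation $\sim$ is reflexive, any edge $m_1 \rightarrow m_2$ of $\Gamma(A)$ is equivalent to itself, an anchored walk of length $1$, and is therefore admissible by definition.

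No step is really an obstacle here: the only content is the length-$2$ argument showing that a relation spanning the boundary of $w \tensor m$ forces $w$ to be a single generator. Everything else is formal (induction on the construction of $\G$ and unpacking the definition of ``admissible''). The lemma is essentially a sanity check confirming that in the Koszul (quadratic monomial) case, the CPS graph $\Gamma(A)$ coincides with Ufnarovski's relation graph of $A^! \cong E(A)$, as mentioned in Section~\ref{CPSgraph}.
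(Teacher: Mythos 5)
Your proof is correct and takes essentially the same approach as the paper: establish that in the quadratic case each annihilator set $\A_m$ consists of single generators (the paper asserts this just for $m\in\G_0$, which suffices for the induction), conclude $\G=\G_0$, and then observe that every edge is anchored and hence admissible. Your write-up merely supplies the details the paper's terse proof leaves implicit.
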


\begin{proof}
Since the minimal generators of $I$ are quadratic, for any generator $x_j$, $\A_{x_j}$ consists of linear monomials. Thus $\G_1\subset\G_0$ and $\G=\G_0$. It follows (see Remark \ref{G0edge}) that every edge of $\Gamma(A)$ is admissible. 

\end{proof}

For our discussion of the Noetherian property, we discard the assumption that ideals in a graded $k$-algebra are generated by homogeneous elements of degrees $\ge 2$.

To establish the main theorem of this section in the left Noetherian case, we filter a left ideal by defining a total order on the path basis of Proposition \ref{pathBasis}. We invoke this total order only when $\Gamma(A)$ has the property that every vertex lying on an oriented circuit has out-degree 1. To handle the right Noetherian case, one first defines the analogous total order under the assumption that every vertex on an oriented circuit has in-degree 1. In the interest of brevity, we provide details only for the left Noetherian case. We define the order in several steps. 

We first fix a total ordering of the $s$ circuits of $\Gamma(A)$: $C_1<C_2<\cdots< C_s$. An \emph{in-path} $p$ for a circuit $C_i$ is an anchored path $p$ with the final vertex of $p$ on $C_i$ and no other vertex of $p$ on $C_i$. The set of in-paths to a particular circuit is finite, so we fix a total ordering on each set of in-paths. Of the maximal paths in $\Gamma(A)$, finitely many terminate on no circuit. We fix a total ordering on these paths as well and define them to be less than any in-path.

If $p$ and $q$ are in-paths of lengths $n$ and $m$ for circuits $C_i$ and $C_j$ respectively, we define $p<q$ if $n<m$ or $n=m$ and $i<j$ or $n=m$ and $i=j$ and $p<q$ in the fixed ordering on in-paths of $C_i$.

If $w$ is any anchored walk in $\Gamma(A)$, then there exists a unique path $\overline{w}$ such that exactly one of the following holds:
\begin{itemize}
\item $\overline{w}$ is an in-path terminating on $C_i$ with $i$ minimal and $\overline{w}$ extends $w$ or
\item $\overline{w}$ is an in-path terminating on $C_i$ and is a proper prefix of $w$  or
\item $\overline{w}$ is a maximal extension of $w$ terminating on no circuit
\end{itemize}

If $p$ and $q$ are anchored walks of lengths $n$ and $m$ respectively, we define $\e_p<\e_q$ if $n<m$ or if $n=m$ and  $\overline{p}<\overline{q}$.

\begin{thm}
\label{Noeth}
For a monomial $k$-algebra $A$, the Yoneda algebra $E(A)$ is left (resp.\ right) Noetherian if and only if 
\begin{enumerate}
\item every vertex of $\Gamma(A)$ lying on an oriented circuit has out-degree (resp. in-degree) 1, and
\item every edge of every oriented circuit is admissible.
\end{enumerate}
\end{thm}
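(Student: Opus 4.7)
The plan is to prove each direction separately for the left Noetherian case; the right case follows by the analogous total order (sorting by terminal-path rather than initial-path structure) and a mirror argument, as the excerpt suggests.

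For the $(\Leftarrow)$ direction, assume both conditions hold, and let $J\subseteq E(A)$ be a nonzero left ideal. I would use the total order on the path basis $\{\e_w\}$ defined just before the theorem. The set of possible values of $\overline{w}$ for anchored walks $w$ is finite (it is a subset of the union of the finitely many maximal non-circuit paths and the finitely many in-paths of circuits). For each such path $P$, pick a basis element $\e_{w_P}\in J$ of minimal length with $\overline{w_P}=P$, if any exists. I claim these finitely many generators suffice. For any $\e_v\in J$, set $P=\overline{v}$. If $P$ is a maximal non-circuit path, the walk $v$ equals $P$, so $\e_v=\e_{w_P}$. If $P$ is an in-path into a circuit $C=c_0c_1\cdots c_{k-1}c_0$, condition (1) forces every extension of $v$ beyond $P$'s terminal vertex to continue around $C$, so $v$ equals $w_P$ followed by a specific integer number of full $C$-traversals terminating at a determined vertex. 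Condition (2) ensures the ``circuit lap'' walks $\beta = c_j c_{j+1}\cdots c_{j+ks}$ are composed of admissible edges and, by Proposition~\ref{pathExtensions} applied inductively, are themselves admissible. Proposition~\ref{pathProducts} then gives $\e_v = \e_\beta \star \e_{w_P}$, so $\e_v$ lies in the left ideal generated by the $\e_{w_P}$.

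For the $(\Rightarrow)$ direction I argue contrapositively, in each case constructing an infinite strictly ascending chain of left ideals. If (1) fails, fix a vertex $v$ on a circuit $C=vc_1\cdots c_{k-1}v$ with an additional out-edge $v\rightarrow b$, $b\neq c_1$, and choose an anchored path $u$ ending at $v$ (which exists since $v\in\G$). Let
$$w_n = u\,(c_1c_2\cdots c_{k-1}v)^n\,b,$$
an anchored walk, and let $J_n = \sum_{m\le n} E(A)\star\e_{w_m}$. By Proposition~\ref{pathProducts} combined with Lemma~\ref{pathEquivalence}(5), any basis element appearing in $E(A)\star\e_{w_m}$ has the form $\e_{w'}$ for $w'$ an anchored walk extending $w_m$ as a walk. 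But $w_{n+1}$ at index $|u|+mk+1$ lies on $C$, whereas $w_m$ has $b$ at that position, so $w_{n+1}$ does not extend $w_m$ for any $m\le n$. Hence $\e_{w_{n+1}}\notin J_n$ and the chain is strict. If instead (1) holds but (2) fails, fix a circuit $C$ with a non-admissible edge $e=c_ic_{i+1}$, an anchored path $u$ terminating at $c_0\in C$, and set $w_n = u\cdot C^n$. Again by Proposition~\ref{pathProducts}, expressing $\e_{w_n}$ as $\e_p\star\e_{w_m}$ for $m<n$ would require $p$ to be an admissible walk equivalent to the tail $c_1\cdots c_0\cdots$ that threads through the edge $e$; using Lemma~\ref{pathEquivalence}(3)--(5) to unwind the equivalence, I would argue that non-admissibility of $e$ obstructs the existence of an anchored representative for the appropriate tail after sufficiently many laps, producing an indecomposable chain whose classes are not reachable by left multiplication from smaller $\e_{w_m}$.

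The main obstacle is the second subcase of the contrapositive, where (2) fails. Admissibility is a global property of a walk (equivalence to an anchored walk), not a local one, so showing precisely that the relevant ``tail lap'' walks cease to be admissible because of the non-admissible edge requires careful bookkeeping with Lemma~\ref{pathEquivalence}; a straightforward induction on lap count, tracking how the monomial equivalences shift degree around $C$ using part~(4), should deliver this. Once that obstruction is pinned down, the rest of the argument is a parallel of the (1)-fails construction.
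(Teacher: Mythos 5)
Your construction for the case where condition (1) fails is sound and is essentially the paper's argument recast as a strictly ascending chain (the paper instead exhibits a single non-finitely-generated ideal); the other two pieces of your proposal, however, each have a genuine gap.

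In the $(\Leftarrow)$ direction you only ever manipulate basis elements lying in $J$: you choose generators $\e_{w_P}\in J$ and then argue about an arbitrary $\e_v\in J$. But a left ideal of $E(A)$ need not be spanned by the basis elements it contains --- the ideal generated by $\e_{w_1}+\e_{w_2}$ typically contains neither $\e_{w_1}$ nor $\e_{w_2}$ --- so your proposed generating set can be empty while $J\neq 0$, and in general it does not generate $J$. This is precisely why the paper introduces the total order on the path basis: it takes the leading term $h(\e)$ of an \emph{arbitrary} element $\e\in J$, shows the ideal of leading terms stabilizes (using that conditions (1) and (2) force every sufficiently long anchored walk to end in an admissible edge, hence be decomposable by Corollary \ref{decompClasses}), and then recovers finite generation of $J$ by the standard Hilbert-basis argument. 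A smaller issue in the same direction: your single product $\e_v=\e_\beta\star\e_{w_P}$ needs the whole lap walk $\beta$ to be admissible, and condition (2) gives only admissibility of its edges; Proposition \ref{pathExtensions} yields admissibility of odd-length laps but not even-length ones. (That part is repairable by multiplying edge-by-edge, since each circuit edge is admissible.)

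More seriously, your treatment of the case where (1) holds but (2) fails rests on a claim that is false: non-admissibility of the edge $e$ does \emph{not} prevent the tail laps through $e$ from being admissible, because admissible walks can contain non-admissible edges. In Example \ref{ex1}, in $\Gamma(A)$ the walk $ab\rightarrow cd\rightarrow ab$ is admissible (equivalent to the anchored walk $b\rightarrow cda\rightarrow ab$) even though its edge $cd\rightarrow ab$ is not admissible, and here (1) holds while (2) fails. If in your construction you take $c_0=cd$, so $w_n=u\,(ab\,cd)^n$, then every tail $ab\rightarrow cd\rightarrow\cdots\rightarrow cd$ is an odd-length extension of the admissible edge $ab\rightarrow cd$, hence admissible by Proposition \ref{pathExtensions}; consequently $\e_{w_n}=\e_{ab\,cd}\star\e_{w_{n-1}}$, your chain stabilizes at once, and no contradiction is obtained --- even though $E(A)$ is in fact not left Noetherian. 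The missing idea is where to stop the walks: the paper takes $q_i=pC^ic_0\cdots c_{j-1}$, ending immediately \emph{before} the source of the non-admissible edge $c_j\rightarrow c_{j+1}$. Out-degree one then forces any extension to begin with that edge, and the first edge of an admissible walk is always admissible (Lemma \ref{pathEquivalence}(1) together with Remark \ref{G0edge}), so every product $\e_w\star\e_{q_i}$ vanishes; the span of the $\e_{q_i}$ is then an infinite-dimensional left ideal with trivial $E(A)_+$-action, hence not finitely generated. No induction on lap count can recover your version of the argument, because at a generic starting vertex the obstruction you are hoping for simply does not exist.
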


\begin{proof}
If $\Gamma(A)$ contains no circuit, $E(A)$ is finite dimensional, hence Noetherian, by Corollary \ref{GKfacts}. Assume $\Gamma(A)$ contains a circuit. 

First suppose $\Gamma(A)$ satisfies conditions (1) and (2).  Let $J$ be any left ideal of $E(A)$. We claim $J$ is finitely generated.

Order the path basis as described above. For any class $\e\in E(A)$, let $h(\e)$ be the largest basis element appearing with nonzero coefficient when $\e$ is expressed in the path basis. Let $F^{\bullet}$ be the natural filtration on $J$ inherited from the cohomology grading on $E(A)$. For $n>0$ let $\mathcal L_n$ be the left ideal generated by $\{h(\e) : \e\in F^nJ\} $. Let $\mathcal L=\bigcup_n \mathcal L_n$.

Conditions (1) and (2) guarantee the existence of a largest integer $d$ such that the final edge in a path $p$ of length $d$ is not admissible. Then by Corollary \ref{decompClasses}, $\e_q$ is decomposable for any anchored walk $q$ of length $>d$. It follows that $\mathcal L/\mathcal L_d$ is finitely generated as a left ideal (if not, one could find anchored walks $p$ and $q$ with $p\vdash q$ and $\e_p$ and $\e_q$ algebraically independent), hence the ascending chain of left ideals $\mathcal L_1\subset \mathcal L_2\subset \cdots$ stabilizes. The fact that $J$ is finitely generated then follows by the standard Hilbert Basis argument. 

Conversely, let $C=c_0\cdots c_n$ be a circuit of length $n$ in $\Gamma(A)$. First suppose vertex $c_i$ has out-degree $>1$, where $0\le i<n$. Let $v\neq c_{i+1}$ be a vertex such that $c_i\rightarrow v$ is an edge in $\Gamma(A)$. Let $p$ be any anchored path of length $m$ in $\Gamma(A)$ such that $p_m=c_{n-1}$. For $\ell\ge 0$, define $q_{\ell} = pC^{\ell}c_0\cdots c_iv$ where $C^{\ell}$ indicates the circuit $C$ is traversed $\ell$ times. Let $J$ be the left ideal of $E(A)$ generated by $\{\e_{q_{\ell}} : \ell \ge 0\}$. We claim that $J$ is not finitely generated.

If $J$ is finitely generated, there exists $L>0$ such that $\e_{q_{\ell}}$ is in the left ideal generated by $\e_{q_0}, \ldots, \e_{q_L}$ for all $\ell>L$. Fix $\ell_0>L$. Then by Proposition \ref{pathBasis} and Corollary \ref{decompClasses}, there exists a walk $w$ and an index $0\le d\le L$ such that $\e_{q_{\ell}}=\e_w\star \e_{q_d}$ and $q_{\ell}\sim q_{d}w$.
Since $v\neq c_{i+1}$ and since $pC^dc_0\cdots c_i$ is a prefix of $q_{\ell}$, Lemma \ref{pathEquivalence}(2) implies $q_d$ is an even-length walk. But by Lemma \ref{pathEquivalence}(3) and (4), $w_0\tensor v=c_{i+2}\tensor c_{i+1}$ (where, if $i=n-1$, $c_{i+2}=c_1$) and $\deg(v)=\deg(c_{i+1})$, implying $v=c_{i+1}$, a contradiction. Thus if $\Gamma(A)$ contains a vertex of out-degree $>1$ lying on a circuit,  $E(A)$ is not left Noetherian.

Suppose instead that every vertex of $C$ has out-degree 1 and $C$ contains an edge $c_j\rightarrow c_{j+1}$ which is not admissible. Let $K$ be the left ideal of $E(A)$ generated by $\e_{q_i}$ for $i\ge 0$ where $q_i=pC^ic_0\cdots c_{j-1}$ (if $j=0$, then since $c_0=c_n$ we take $q_i=pC^ic_0\cdots c_{n-1}$). Since $c_jc_{j+1}$ is not admissible, and since $c_j$ is the only successor of $c_{j-1}$ in $\Gamma(A)$, it follows from Lemma \ref{pathEquivalence}(1) and Proposition \ref{pathProducts} that $\e_w\star \e_{q_i}=0$ for any admissible walk $w$. Thus $K$ is an infinitely-generated trivial left ideal and $E(A)$ is not left Noetherian.

We omit the analogous proof for the right Noetherian case.

\end{proof}

For the algebras $A$ and $B$ of Example \ref{ex1}, $E(A)$ and $E(B)$ are neither left nor right Noetherian. Comparing our graph-theoretic characterizations of GK dimension and the Noetherian property, we have the following immediate corollary.

\begin{cor}
Let $A$ be a monomial $k$-algebra. If $E(A)$ is left or right Noetherian, then $\GKdim E(A)\le 1$. If  $E(A)$ is Noetherian then $\Gamma(A)$ consists of finitely many disjoint circuits and paths.
\end{cor}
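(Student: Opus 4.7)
The plan is to derive both assertions directly from Corollary \ref{GKfacts} together with the in-degree/out-degree characterization in Theorem \ref{Noeth}.

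For the first assertion, suppose $E(A)$ is left Noetherian, so by Theorem \ref{Noeth} every vertex of $\Gamma(A)$ on an oriented circuit has out-degree $1$. I would first show two distinct circuits cannot share a common vertex: if circuits $C$ and $C'$ shared a vertex $v$, then $v$ has a unique outgoing edge, forcing both circuits to visit the same successor $v_1$; since $v_1$ also lies on a circuit it too has out-degree $1$, and induction forces $C=C'$. Next, once a walk visits a circuit vertex, each successive vertex is the forced unique out-neighbor, so the walk must remain on that single circuit forever. Thus no walk contains more than one distinct circuit, and Corollary \ref{GKfacts}(2) and (3) yield $\GKdim E(A)\le 1$. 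The right Noetherian case is dual: the in-degree $1$ hypothesis at circuit vertices lets one run the same arguments backward in time, showing that any walk which reaches a circuit vertex must have arrived via vertices lying entirely on that circuit, so again it visits at most one circuit.

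For the second assertion, if $E(A)$ is Noetherian on both sides then every circuit vertex has both in-degree and out-degree equal to $1$. Consequently every edge incident to a circuit vertex must already belong to that circuit, so each oriented circuit forms an isolated connected component of $\Gamma(A)$, disjoint from every other circuit and from the remaining vertices. The remaining vertices support a subgraph of $\Gamma(A)$ with no oriented circuit whatsoever, which together with the finiteness of $\Gamma(A)$ gives the decomposition of the graph into finitely many disjoint circuits and acyclic (path-type) components.

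The only genuinely delicate point I anticipate is the backward-in-time bookkeeping in the right Noetherian case, where one must verify that the in-degree $1$ condition propagates through the entire past of a walk that ever touches a circuit vertex; once that propagation is written out carefully, both parts of the corollary follow in just a few lines from Theorem \ref{Noeth} and Corollary \ref{GKfacts}.
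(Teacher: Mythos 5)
Your proposal is correct and follows essentially the same route as the paper, which presents this corollary as an immediate consequence of comparing Theorem \ref{Noeth} with Corollary \ref{GKfacts}; your out-degree (forward) and in-degree (backward) propagation arguments simply supply the details the paper leaves implicit. The one loose point---identifying the acyclic components with ``paths'' at the end---is a looseness already present in the paper's own statement, since Theorem \ref{Noeth} imposes no degree conditions on vertices lying off circuits, and such components can in fact branch.
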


\section{Examples}
\label{examples}

The following example suggests that the $\GKdim(E(A))=\infty$ case can be quite complicated; edges that are dense in one infinite walk need not be dense in another.

\begin{ex}
\label{infGK}
Let $S=k\la w, x, y, z, W, X, Y, Z, p,q\ra$ be a free algebra and let $I$ be the ideal generated by 
$$\begin{array}{cccc}
pqwxyz & wxyzp & xyzpqwx & YZpqwx \\ 
 pqWXYZ& WXYZp   & XYZpqWX & yzpqWX\\ 
\end{array}$$
Let $A$ be the factor algebra $A=S/I$. The graph $\Gamma(A)$ has two components and is shown in Figure 1 below.  Admissible edges are indicated by solid arrows; dashed arrows are non-admissible edges. Vertex $pq$ is common to two oriented circuits, so $\GKdim(E(A))=\infty$. There are many infinite walks in $\Gamma(A)$. The walk
$$p\rightarrow wxyz\rightarrow pq\rightarrow wxyz\rightarrow pq\rightarrow \cdots$$
is anchored and $wxyz\rightarrow pq$ is dense in this walk since the even-length walk
$$wxyz\rightarrow pq\rightarrow wxyz\rightarrow pq\rightarrow wxyz$$ is equivalent to $$z\rightarrow pqwxy\rightarrow xyz\rightarrow pqw\rightarrow wxyz$$
However, the walk
$$p\rightarrow wxyz\rightarrow pq\rightarrow WXYZ\rightarrow pq\rightarrow wxyz\rightarrow pq\rightarrow WXYZ\rightarrow \cdots$$
contains no dense edge. To see this, observe that the equivalent anchored walks corresponding to odd-length admissible extensions of $wxyz$ (and likewise of $WXYZ$) terminate on the circuit
$$yz\rightarrow pqwx\rightarrow YZ\rightarrow pqWX\rightarrow yz$$
It follows that no even-length extension of $wxyz\rightarrow pq$ is admissible because condition (2) of Lemma \ref{pathEquivalence} cannot be satisfied. By Theorem \ref{mainTheorem}, $E(A)$ is not finitely generated.

\end{ex}

\begin{figure}
\label{figure}
$$
\xymatrix{
 xyzpqw\ar@{-->}[r] & w &   &   xyzpq\ar@{-->}[ll]\\
 x\ar@{->}[d]\ar@{->}[u] &  & YZpq\ar@{-->}[ld]   \\
 YZpqw\ar@{-->}[r] & WX\ar@{->}[dd]\ar@{->}[rd]   & & wx\ar@{->}[lu]\ar@{->}[uu] & yzpqW\ar@{-->}[l]  \\ 
  &  & yzpq\ar@{-->}[ru] &  & X\ar@{->}[u]\ar@{->}[d]   \\
  &   XYZpq\ar@{-->}[rr]&  &W & XYZpqW\ar@{-->}[l]   \\
}
$$

$$
\xymatrix{
    &       & z\ar@{->}[r]  &  pqwxy\ar@{-->}[rd]\ar@{-->}[ld] & \\
    &       & YZ\ar@{->}[ld]   &        & xyz\ar@{->}[ddddd] \\
    & pqWX\ar@{->}[dr]\ar@{->}[dl]  &      & pqwx \ar@{->}[ru]\ar@{->}[lu]   & \\
XYZ\ar@{->}[ddd] &       & yz\ar@{->}[ru]   & & \\
    & pqWXY\ar@{-->}[ur]\ar@{-->}[ul] & Z\ar@{->}[l]    & & \\
               &       & WXYZ\ar@/^/[d] & & \\
pqW\ar@{-->}[rru]\ar@{-->}[rrd] &   p\ar@{->}[ru]\ar@{->}[rd]   & pq\ar@/^/@{-->}[u] \ar@/^/@{-->}[d] & & pqw\ar@{-->}[llu]\ar@{-->}[lld]\\
    &       & wxyz\ar@/^/[u] & & \\
}
$$
\caption{The graph $\Gamma(A)$ for Example \ref{infGK}.}
\end{figure}
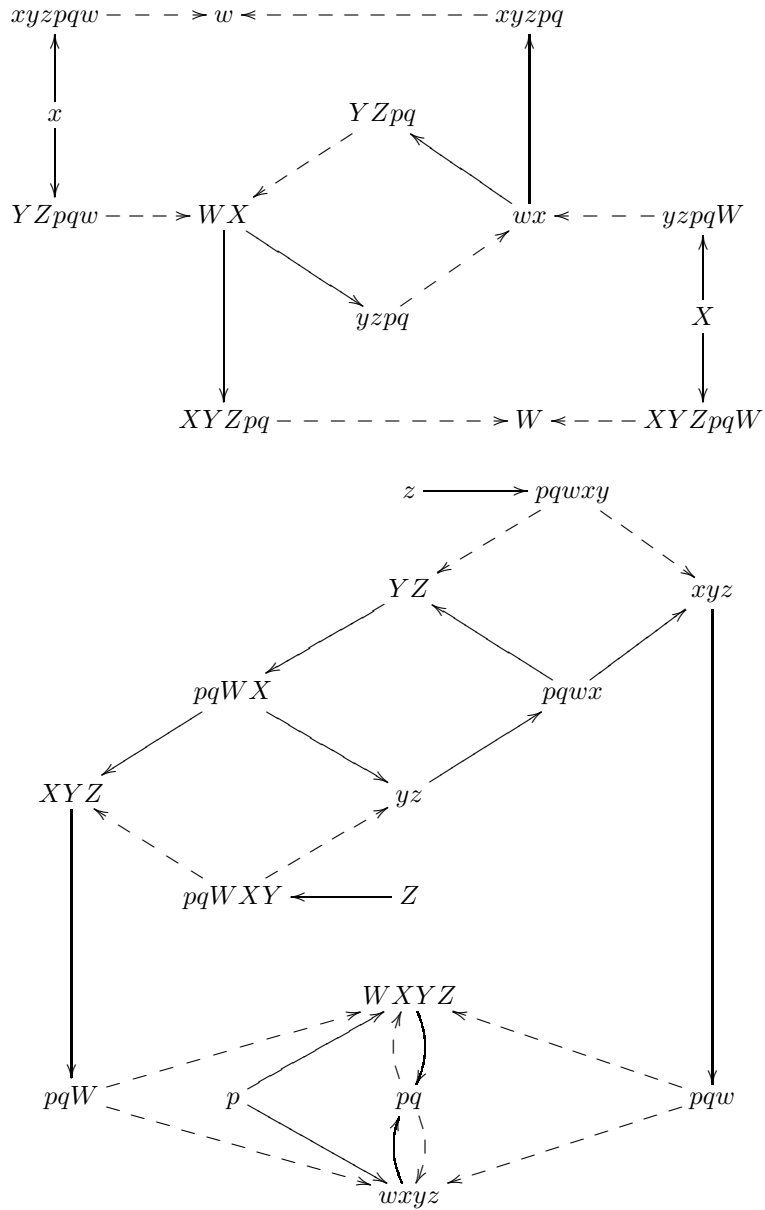

\begin{ex}
Let $A=k\la x, y\ra/\la x^3-x^2y, xy^2, y^3\ra$ and observe $x^4=0$ in $A$. The degree-lexicographic ordering on monomials in $k\la x, y\ra$ with $x<y$ yields the associated monomial algebra $A'= k\la x, y\ra/\la x^2y, xy^2, y^3, x^4\ra$. Although $\dim E^i(A)\le\dim E^i(A')$ for all $i$, one can check that equality does not always hold. The graph $\Gamma(A')$ is shown below. By Corollary \ref{GKfacts}, we have $\GKdim(E(A'))=2$. It follows that $\GKdim(E(A))\le 2$. 
$$
\xymatrix{
x^3 \ar@/^/@{->}[r] & x \ar@/^/@{->}[l] & y^2\ar@{->}[l] \ar@/^/@{->}[d] & \\
    & xy\ar@{->}[u] & y\ar@{->}[l]\ar@{->}[r] \ar@/^/@{->}[u] & x^2\ar@(ur,dr)[] \\
    & & \Gamma(A') &\\
}
$$
We leave to the reader the straightforward verification that $\GKdim(E(A))>1$, hence $\GKdim(E(A))=2$ by Bergman's gap theorem.

In many cases of interest, knowing $\GKdim(E(A'))$ provides little or no informaiton about $\GKdim(E(A))$. Consider the algebra 
$$A=\dfrac{k\la x,y,z\ra}{\la xy - z^2, zx - y^2, yz - x^2\ra}$$ The algebra $A$ is a 3-dimensional Sklyanin algebra, hence $\GKdim(E(A))=0$. Using lexicographic ordering with $z>y>x$, the associated monomial algebra of $A$ is
$$A' = \dfrac{k\la x, y, z\ra}{\la z^2, zx, yz, y^3, zy^2, yxy, yx^3, y^2x^2, zyx^2\ra}$$ Constructing the CPS graph of $A'$ reveals $\GKdim(E(A'))=\infty$.

\end{ex}

\bibliographystyle{amsplain}
\bibliography{bibliog2}

\end{document}